\documentclass[12pt]{article}
\usepackage{latexsym, amsmath, amsfonts, amsthm, amssymb}
\usepackage{times}
\usepackage{a4wide}
\usepackage{graphicx, tocloft}
\usepackage{mathrsfs}

\def \Vol {\text{\rm Vol}}

\def \RR {\mathbb R}

\def \EE {\mathbb E}

\def \PP {\mathbb P}

\def \eps {\varepsilon}

\def \cD {\mathcal D}

\def \cF {\mathcal F}

\def \cS {\mathcal S}
\def \cB {\mathcal B}

\def \cD {\mathcal D}

\newtheorem{theorem}{Theorem}[section]

\newtheorem{lemma}[theorem]{Lemma}

\newtheorem{proposition}[theorem]{Proposition}
\newtheorem{corollary}[theorem]{Corollary}

\theoremstyle{definition}
\newtheorem{remark}[theorem]{Remark}

\def\myffrac#1#2 in #3{\raise 2.6pt\hbox{$#3 #1$}\mkern-1.5mu\raise 0.8pt\hbox{$
		#3/$}\mkern-1.1mu\lower 1.5pt\hbox{$#3 #2$}}

\def\qed{\hfill $\vcenter{\hrule height .3mm
		\hbox {\vrule width .3mm height 2.1mm \kern 2mm \vrule width .3mm
			height 2.1mm} \hrule height .3mm}$ \bigskip}

\def \susbeteq {\subseteq}

\begin{document}

\title{A polynomial bound in Dvoretzky's theorem}
\author{Boaz Klartag \ \ and \ \ Shahar Moshe}
\date{}
\maketitle

\begin{abstract}
We present a simple proof of the $\eps$-Dvoretzky conjecture, which asserts that the dependence
on the approximation parameter $\eps$ in Dvoretzky's theorem is polynomial
in $1/\eps$. In particular, if $n \geq (C/\eps)^{\ell/2+1}$, then any $n$-dimensional
convex body has, through any given interior point, an $\ell$-dimensional
section that is $\eps$-close to a Euclidean ball. Here, $C > 0$ is a universal constant.
We in fact obtain a sharper dependence on $\eps$.
The proof is probabilistic,
but uses a different probabilistic model from those employed previously. 
We also prove a simultaneous version of our theorem for a finite family of convex bodies with the origin in their interior, yielding a common
linear subspace on which all of the bodies have nearly-Euclidean sections.
Finally, we compare the radius of the approximating Euclidean ball
with familiar geometric parameters, such as the mean widths of the
body and its dual.
\end{abstract}

\section{Introduction}

Dvoretzky's theorem asserts that high-dimensional convex bodies admit approximately-Euclidean sections of a rather large dimension.
The quantitative question is how high the dimension of the convex body must be
in terms of the dimension of the section and the desired accuracy of
approximation by a Euclidean ball.

\medskip In this paper we improve upon the dependence on the approximation parameter $\eps$ in Dvoretzky's theorem, from exponential to polynomial dependence.
Our basic result is the following:

\begin{theorem} Let $K \subseteq \RR^n$ be a convex body containing the origin in its interior. Let $0 < \eps < 1/2$
and $2 \leq \ell \leq n$, and assume that
\begin{equation}  n \geq \left( \frac{C}{\eps}  \right)^{\frac{\ell+1}{2}} |\log \eps|, \label{eq_1133} \end{equation}
where $C > 0$ is a universal constant.
Then there exists an $\ell$-dimensional linear subspace $E \subseteq \RR^n$ and a centered ellipsoid $\cF \subseteq E$ such that
$$ \cF \subseteq K \cap E \subseteq (1 + \eps) \cF. $$
\label{thm1}
\end{theorem}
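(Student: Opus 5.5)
The plan has two parts: a deterministic reduction from approximating $\|\cdot\|_K$ on all of the sphere of $E$ to approximating it on a net, followed by a probabilistic (or greedy) construction of a pair $(E,\cF)$ that works on the net. Write $\|x\|=\|x\|_K$ for the gauge of $K$. After a linear change of variables we may take $\cF$ to be the Euclidean unit ball of $E$. The goal is then an $\ell$-dimensional $E$ with $1 \le \|x\|\le 1+\eps$ on $S(E)$, up to constant rescaling of $\eps$.

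\emph{Step 1: net reduction with a second-order gain.} The key observation is that convexity of the gauge lets a net of mesh $\delta=c\sqrt{\eps}$, rather than mesh $\eps$, control the gauge on the whole sphere. Let $N\subseteq S(E)$ be a $\delta$-net, and suppose $|\,\|y\|-1|\le\eps$ for all $y\in N$.

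For the upper bound, any $x\in S(E)$ lies in the cone over a small simplex of net points within distance $C\delta$. Hence $x=\lambda z$, where $z$ is a convex combination of these net points and $|z|\ge 1-C\delta^2 = 1-C'\eps$. Convexity then gives $\|x\|\le (1+\eps)/(1-C'\eps)=1+O(\eps)$.

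For the lower bound, take $y\in N$ with $|x-y|\le\delta$ and extend the segment from $x$ through $y$ to a point $w$ at distance about $\delta$ beyond $y$. Projecting back to the sphere costs only $O(\delta^2)=O(\eps)$, because $y$ is nearly the midpoint of a chord. Convexity applied to $y\approx (x+w)/2$, together with the upper bound $\|w/|w|\|\le 1+O(\eps)$ just proved, yields $\|x\|\ge 2(1-\eps)-(1+O(\eps))=1-O(\eps)$.

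So it suffices to control $\|\cdot\|$ on a net of cardinality $|N|\le (C/\sqrt{\eps})^{\ell-1}=(C/\eps)^{(\ell-1)/2}$. This is precisely where the exponent $(\ell+1)/2=(\ell-1)/2+1$ in (\ref{eq_1133}) should come from.

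\emph{Step 2: realizing prescribed values on the net.} We then need $n\gtrsim |N|\cdot \eps^{-1}|\log\eps|$, so the budget is about $\eps^{-1}|\log \eps|$ ambient dimensions per net point. I would not use the classical model of a uniformly random subspace with Gaussian concentration of a Lipschitz function. That model yields an exponential dependence on $\eps$, and it is also poorly suited to non-symmetric $K$ with the origin merely in the interior.

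Instead I would build $E$ together with its embedding adaptively. Fix an abstract copy of $\RR^\ell$ and the net $N$ in it, and construct a linear map $T:\RR^\ell\to\RR^n$ in stages, one stage per net point. At each stage we retain a subspace of codimension only $O(\eps^{-1}|\log\eps|)$ per point used so far. Within it, a random choice (for instance Gaussian in the remaining coordinates) of how $T$ acts brings $\|Ty\|$ to within a factor $1\pm\eps$ of a common target radius $r$, while the values at previously treated net points are not disturbed. The $\eps^{-1}$ factor should come from the fact that a one-parameter family of random directions of dimension $m$ hits a window of relative width $\eps$ with probability about $1-(1-\eps)^m$. The $|\log\eps|$ factor then pays for a union bound over the $O(1)$ failure modes at each stage. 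Finally we set $E=T(\RR^\ell)$ and $\cF=r\,T(B^\ell)$.

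\emph{Expected main obstacle.} The hard step is Step 2: showing that each additional net point costs only $O(\eps^{-1}|\log\eps|)$ dimensions, with no loss that compounds across the $(C/\eps)^{(\ell-1)/2}$ stages. Adjusting $T$ to fix a new net point changes $T$ at nearby points, so I must keep the earlier net points essentially untouched. I would first try to have each stage move $T$ only in directions orthogonal to everything used previously. I would also exploit that the gauge restricted to such a fresh orthogonal block behaves independently of the earlier blocks, and then check that the accumulated errors over all stages remain within $O(\eps)$.
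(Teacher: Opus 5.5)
Your Step 1 is correct, and it is essentially the convexity trick the paper uses. With a net of mesh $\sqrt{\eps/8}$ and the identity $|2v_i-v|^2=1+2|v_i-v|^2$, a lower bound for $\|\cdot\|_K$ at the net points propagates to the whole sphere, and this is where the factor $(C/\eps)^{(\ell-1)/2}$ comes from.

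\textbf{The gap is Step 2.} It describes a hoped-for construction rather than giving an argument, and as described it cannot work for a general $K$. There are two obstructions.
\begin{itemize}
\item $T$ is linear and the net has $N\gg\ell$ points. A correction $T\mapsto T+S$ aimed at a new net point leaves every earlier value $Ty_j$ unchanged only if $Sy_j=0$ for all earlier $j$. Once the treated points span $\RR^\ell$, this forces $S=0$, so earlier net points are necessarily moved.
\item Routing $S$ into fresh orthogonal coordinates does not neutralize this, because the gauge of a general convex body does not decouple over orthogonal blocks: $\|u+w\|_K$ is not determined by $\|u\|_K$ and $\|w\|_K$. Even when it does decouple, as for the cross-polytope on disjoint coordinate blocks, the interaction is additive rather than a maximum.
\end{itemize}
Your heuristic is that a random attempt lands in a window of relative width $\eps$ with probability about $\eps$, while failed attempts are harmless. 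That is precisely the $\ell_\infty$ structure of the cube, where $Tx=(\langle x,\theta_i\rangle)_{i\le n}$ and $\|Tx\|_K=\max_i|\langle x,\theta_i\rangle|$. It has no analogue for arbitrary $K$. Nothing in the proposal controls the accumulation of errors over the $(C/\eps)^{(\ell-1)/2}$ stages, which is exactly the obstacle you flag and leave open.

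\textbf{The missing idea.} Do not build $T$ point by point; sample it uniformly from the convex body $\cB=\{T\in\RR^{n\times\ell}\,;\,T(B^\ell)\subseteq K\}$. The argument then runs as follows.
\begin{itemize}
\item The upper bound $\|Tx\|_K\le|x|$ holds for every $x$ automatically, so the upper-bound half of your Step 1 becomes unnecessary. Only the one-sided bad events $\cB_v=\{T\in\cB\,;\,\|Tv\|_K\le 1-\eps/4\}$ at the net points must be excluded.
\item Your own convexity trick shows $\frac{1}{N}\sum_{j=1}^N\cB_{v_j}\subseteq(1-\frac{\eps}{16N})\cB$. Indeed, for each unit $v$, the summand indexed by the net point nearest to $v$ contracts $v$ by $\eps/16$.
\item The sets $\cB_{v_j}$ all have the same volume, by the volume-preserving map $T\mapsto T\circ U$ with $U$ orthogonal.
\item Brunn--Minkowski in $\RR^{n\times\ell}$ then gives $\Vol(\cB_v)/\Vol(\cB)\le(1-\frac{\eps}{16N})^{n\ell}\le\exp(-\eps n\ell/(16N))$.
\item A union bound over the $N$ net points requires only $\eps n\ell/(16N)\ge 1+\log N$, which follows from the hypothesis for a suitable universal $C$.
\end{itemize}
In this scheme the dimension budget is a volume exponent shared by all net points simultaneously. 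There is no sequential perturbation, so nothing compounds.
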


For example, it was previously known that an $n$-dimensional convex body with central symmetry admits a $3$-dimensional section through its center
which is $\eps$-close to an ellipsoid for $\eps = C \log \log n / \log n$. See (\ref{eq_1347}) below. Theorem \ref{thm1} improves this
estimate to a power law, namely the section is $\eps$-close to an ellipsoid for $\eps = C \sqrt{\log n} / \sqrt{n}$.

\medskip The proof of Theorem \ref{thm1} is probabilistic and rather simple, as we explain in Section \ref{sec2}. 
While the concentration of measure phenomenon is lurking in the background, the actual proof only requires the Brunn-Minkowski inequality in the space of operators and a union bound.

\medskip 
It is well-known that any $\ell$-dimensional centered ellipsoid admits an $\lceil \ell/2 \rceil$-dimensional
section through its center which is an exact Euclidean ball, see e.g. \cite[Lemma~4]{DvoretskyOriginal}. 
Thus, by applying Theorem \ref{thm1} with $2\ell-1$ in place
of $\ell$, we obtain an $\ell$-dimensional section approximated
by a Euclidean ball under the stronger requirement that 
$n \geq (C/\eps)^\ell |\log\eps|$.
A more refined argument allows us to replace the ellipsoid in Theorem \ref{thm1} by a Euclidean ball, without 
any degradation of the bounds from Theorem \ref{thm1}, apart from the value of the universal constant:

\begin{theorem} Under the assumptions of Theorem \ref{thm1} (with a possibly different universal constant $C >0$), there exists an $\ell$-dimensional linear subspace $E \subseteq \RR^n$
and $r > 0$ such that
$$ r B_E \subseteq K \cap E \subseteq (1 + \eps) r B_E, $$
where $B_E = \left \{ x \in E \, ; \, |x| \leq 1 \right \}$ is the unit Euclidean ball in the subspace $E$.
\label{thm3}
\end{theorem}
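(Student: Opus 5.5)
We will deduce Theorem \ref{thm3} from the probabilistic machinery behind Theorem \ref{thm1}, not from its statement alone. Applying Theorem \ref{thm1} in dimension $2\ell-1$ and then taking an exact Euclidean section of the ellipsoid costs a factor of two in the exponent. To avoid this, we use that the random model in the proof of Theorem \ref{thm1} does not produce an arbitrary ellipsoid. Recall the scheme we expect there: one picks a random linear map $T$ (or a random $\ell$-dimensional subspace together with an operator) from a log-concave distribution on a space of operators. The Brunn--Minkowski inequality for operators then shows that the set of $T$ for which $T(B^\ell)$ is nearly inscribed in $K$ carries a non-negligible fraction of the mass. The union bound is taken over an $\eps$-net of the sphere $S^{\ell-1}$, whose cardinality $(C/\eps)^{\ell-1}$ is, after a square root coming from a second-order (Taylor) cancellation, the origin of the exponent $(\ell+1)/2$. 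In that model the ellipsoid is $T(B^\ell)$ with $T$ distributed near a multiple of an isometry.

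The plan is to modify the distribution so that $T$ is forced to be, up to error $\eps$, a scalar multiple of an isometry. Then $\cF=T(B^\ell)$ satisfies $r B_E\subseteq \cF\subseteq(1+\eps) rB_E$ on $E=T(\RR^\ell)$, and rescaling $\eps$ by a constant gives the conclusion.

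Concretely, we proceed in three steps.
\begin{enumerate}
\item Write $T=rU(\mathrm{Id}+A)$, where $U$ is a uniformly random isometric embedding $\RR^\ell\to\RR^n$, $r>0$ is a scale, and $A$ is a small symmetric perturbation.
\item Redo the Brunn--Minkowski step with $A$ restricted to the convex set of symmetric matrices of operator norm at most $\eps$. This set is still convex and symmetric, so the log-concavity argument goes through.
\item Restrict the union bound to those net points of $S^{\ell-1}$ at which the radial function of $K\cap E$ must be compared with $r$.
\end{enumerate}
The main point is that the event ``$T(B^\ell)\subseteq K$ and $K\cap T\RR^\ell\subseteq(1+\eps)T(B^\ell)$'' is defined by convex constraints in the variable $A$, for fixed $U$ and $r$. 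Its probability can therefore be bounded below exactly as in Theorem \ref{thm1}, losing only a constant factor in the volume of the admissible set of $A$. That loss is $c^{\ell^2}$ at worst, and must be absorbed into $(C/\eps)^{(\ell+1)/2}$.

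The main obstacle is precisely this: confining $A$ to an $\eps$-ball in an $\ell(\ell+1)/2$-dimensional space appears to cost a factor of $\eps^{\ell(\ell+1)/2}$, far more than we can afford. The first remedy to try is to avoid confining $A$ a priori. We would take the near-isometric (ellipsoidal) section produced by Theorem \ref{thm1}'s argument and show, via a symmetrization or averaging over the orthogonal group $O(\ell)$, that the ellipsoid's eccentricity is itself $O(\eps)$. The reason this should hold is that the scale $r$ is fixed by a mean-width (or median) type parameter of $K$ that is the same in all directions, so radial deviations of size larger than $\eps$ are already excluded by the union bound.

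If that fails, a fallback is an iterative argument. Inside the $\ell'$-dimensional ellipsoidal section with $\ell'=\ell+O(1)$, select an $\ell$-dimensional subspace on which the ellipsoid's semi-axes vary by at most a factor $1+\eps$. This uses a pigeonhole argument on the axis lengths, which lie in a bounded range after normalization. Adjusting $C$ then absorbs the slight increase in dimension.
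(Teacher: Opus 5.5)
There is a genuine gap: none of your three routes produces a round section without losing the factor $2$ in the exponent.

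Your parametrization $T=rU(\mathrm{Id}+A)$ discards exactly what makes the argument work. The set of near-multiples of isometries is not convex in $\RR^{n\times\ell}$, so Brunn--Minkowski cannot be applied to it. If you condition on $U$ and $r$, then $E=U(\RR^\ell)$ is fixed, and the randomness of $A$ cannot affect $K\cap E$ at all. If $U$ is Haar, then $E$ is uniform on $G_{n,\ell}$, which is the traditional model that already gives only exponential dependence for the cube. The gain in Section~\ref{sec2} comes from the factor $(1-\eps/(16N))^{n\ell}$, i.e.\ from Brunn--Minkowski in the full $n\ell$-dimensional body $\cB$.

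Your first remedy rests on a false premise. The uniform law on $\cB$ is invariant only under $T\mapsto TU$ with $U\in O(\ell)$, not under rotations of $\RR^n$, and the union bound controls $\|Tv\|_K$, not $|Tv|$. For example, let $K=D(B^n)$ with $D=\mathrm{diag}(L,1,\ldots,1)$. Then $T=DS$ with $S$ uniform in $\cB_\ell(B^n)$, so $T^*T=S^*S+(L^2-1)s_1s_1^*$, where $s_1=S^*e_1$ has length about $\sqrt{\ell/n}$. Hence, with probability bounded away from zero, $T(B^\ell)$ has one semi-axis of order $\sqrt{1+L^2\ell/n}$ and the others close to $1$. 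The same holds for the ellipsoid $K\cap E$, which $T(B^\ell)$ approximates. This eccentricity is unbounded as $L\to\infty$.

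The same example destroys the ``bounded range'' in your fallback. In any case, Cauchy interlacing shows that an $\ell'$-dimensional ellipsoid whose consecutive semi-axes differ by factors exceeding $1+\eps$ has no central section of dimension $k>(\ell'+1)/2$ within a factor $1+\eps$ of a Euclidean ball. So you would need $\ell'\geq 2\ell-1$, which gives precisely the weaker bound $n\geq(C/\eps)^{\ell}|\log\eps|$ you set out to avoid.

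The missing idea is to keep $T$ uniform in a convex body of $\RR^{n\times\ell}$, but let that body encode both $K$ and a Euclidean ball, with the relative scale fixed by a variational principle. The paper takes $(1,r)$ to be $\ell$-balancing scales for $K$ and $B^n$: $r$ maximizes $t\mapsto\Vol(\cB_\ell(K\cap tB^n))/t^{n\ell/2}$ (Lemma~\ref{def_1431}). It then samples $T$ uniformly in $\cB_\ell(K\cap rB^n)$. By maximality (Lemma~\ref{cor_931}), shrinking either constraint by a factor $s<1$ multiplies the volume by at most $s^{n\ell/2}$. This replaces the homogeneity $\Vol((1-\delta)\cB)=(1-\delta)^{n\ell}\Vol(\cB)$ in the Brunn--Minkowski step, at the price of $n\ell/2$ in place of $n\ell$.

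Running the net and union-bound argument for both constraints (Theorem~\ref{thm4} with $m=2$) produces a single ellipsoid $\cF=T(B^\ell)$ with $\cF\subseteq K\cap E\subseteq(1+\eps)\cF$ and $\cF\subseteq rB_E\subseteq(1+\eps)\cF$. Hence $\frac{r}{1+\eps}B_E\subseteq K\cap E\subseteq(1+\eps)rB_E$, and replacing $\eps$ by $\eps/3$ finishes the proof. Roundness is forced by the ball constraint inside the same convex body, not by any symmetry of the law of $T$.
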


\medskip Theorem \ref{thm1} resolves a conjecture that the dependence
on $\eps$ in Dvoretzky's theorem should be polynomial in $1/\eps$, see V. Milman \cite{MilmanTopology} or Klartag and Novikov \cite{KN}.
Paouris and Valettas \cite{PV} proved a version of Theorem \ref{thm1} for centrally-symmetric convex bodies (i.e., when $K = -K$), where the expression on the right-hand side of (\ref{eq_1133}) is replaced by
\begin{equation}  \exp \left( \frac{C \ell |\log \eps|}{\eps} \right). \label{eq_1347} \end{equation}
The estimate (\ref{eq_1347}) improves upon  the exponent in the earlier
bound of Schechtman \cite{schechtman} by a factor of order
$|\log\eps|$. Schechtman's bound in turn improved upon\ Gordon's
estimate \cite{G} by a factor of order
$1/(\eps|\log\eps|^2)$ in the exponent. These proofs built upon a highly influential approach
by Milman to Dvoretzky's theorem relying on  the  concentration of measure phenomena, which has found applications in several branches of mathematics  \cite{Mil71, ConjectureStatement}.

\medskip Dvoretzky's theorem was originally conjectured in the 1950s by Grothendieck \cite{Grothendieck} and by others (see \cite{DvoretzkyOriginalAnnouncement}),
and it was proven around  1960 in \cite{DvoretskyOriginal, DvoretskyErratum}. In addition to Milman's proof, there are alternative
arguments in Szankowski \cite{Szan} and in Figiel \cite{figiel}.
All of these proofs are probabilistic in nature
and yield an exponential (or worse)
dependence on $1/\eps$ already when  $\ell = 3$, similarly to the estimate (\ref{eq_1347}).
Polynomial dependence on $1/\eps$ was known to hold in the particular case
where $K$ is assumed to have the symmetries of the cube, see Bourgain and Lindenstrauss \cite{Bourgain}, Tikhomirov \cite{Tikhomirov} and Fresen~\cite{Fresen}.

\medskip
The case $\ell = 2$ was different. In this case, there is a topological proof yielding polynomial dependence on $1/\eps$,
which is attributed to Gromov by  Milman \cite{MilmanTopology}. Burago, Ivanov and Tabachnikov \cite{BBT} analyzed the shortcomings of certain topological approaches to Dvoretzky's theorem for $\ell \geq 3$.
Klartag and Novikov \cite{KN} used a topological argument for proving the existence of a section admitting several approximate symmetries, with a polynomial dependence on $1/\eps$.

\medskip Traditionally, the probabilistic approaches
to Dvoretzky's theorem followed a strategy that can be summarized as follows:
\begin{center}
{\it Apply an appropriate invertible affine map to $K$ (i.e., put $K$ in the right ``position'').  Then, let~$E$ be a random $\ell$-dimensional
subspace, distributed uniformly in the Grassmannian $G_{n, \ell}$.}
\end{center}
In our context, this approach is already problematic in the case of the cube $K = [-1,1]^n$, where it 
yields a suboptimal, exponential dependence on $1/\eps$. The case of the cube is analyzed in Gordon, Litvak, Pajor and Tomczak-Jaegermann \cite{GLPT}, where an interesting approach is proposed for remedying the situation. They showed that in the case of the cube,
polynomial dependence on $1/\eps$ may be obtained if the subspace $E$ takes the form $E = T(\RR^\ell)$ where
\begin{equation}  T x = \left( \langle x, \theta_1 \rangle, \langle x, \theta_2 \rangle,\ldots, \langle x, \theta_n \rangle \right) \qquad \qquad \qquad (x \in \RR^\ell) \label{eq_1203} 
\end{equation}
and $\theta_1,\ldots, \theta_n$ are independent random vectors distributed uniformly in $S^{\ell-1}$
(or alternatively in $B^{\ell}$).
Here, $S^{\ell-1} = \left \{ x \in \RR^\ell \, ; \, |x| = 1 \right \}$ is the unit sphere 
while $B^{\ell} = \left \{ x \in \RR^\ell \, ; \, |x| \leq 1 \right \}$ is the unit ball. 
Note that the law of the random subspace $E = T(\RR^\ell)$ here is  different from the uniform distribution
on the Grassmannian~$G_{n, \ell}$.

\medskip For a general convex body $K \subseteq \RR^n$ containing the origin in its interior, our probabilistic approach for the proof of Theorem \ref{thm1} may be summarized as follows:
\begin{center}
{\it Consider the convex body $\cB$ consisting of all linear maps $T: \RR^{\ell} \rightarrow \RR^{n}$ with $T(B^{\ell}) \subseteq K$. \\ Let $T$ be a random, uniformly distributed element of $\cB$. Set $E = T(\RR^\ell)$.}
\end{center}
This probabilistic model yields a somewhat more ``invariant'' way to sample a random subspace in a finite-dimensional normed space; it circumvents the need to choose a position for $K$. The drawback of this probability distribution is that it is very much tailored
to $K$, rather than being a Haar measure on a homogeneous space as in the traditional approach to Dvoretzky's theorem.

\medskip
In what follows, we write $\RR^{n \times \ell}$ for the vector space of real, $n \times \ell$-matrices. We
identify a matrix $T \in \RR^{n \times \ell}$ with the associated linear map $x \mapsto Tx$ from $\RR^{\ell}$ to $\RR^n$.
We recall that a convex body is a compact, convex set with non-empty interior, and that each convex body in a finite-dimensional
vector space is of finite, non-zero volume.  Theorem \ref{thm1} follows immediately from:

\begin{theorem} Let $K \subseteq \RR^n$ be a convex body containing the origin in its interior, and let $0 < \eps < 1/2$.
Let $1 \leq \ell \leq n$ be such that
\begin{equation}  n \geq \left( \frac{C}{\eps}  \right)^{\frac{\ell+1}{2}} |\log \eps|, \label{eq_1502} \end{equation}
where $C > 0$ is a universal constant.

\medskip
Write $\cB \subseteq \RR^{n \times \ell}$ for the convex body consisting of all $T \in \RR^{n \times \ell}$ with $T(B^{\ell}) \subseteq K$. Let $T \in \RR^{n \times \ell}$ be a random matrix, distributed uniformly in $\cB$. Set $E = T(\RR^{\ell})$, which is almost-surely an $\ell$-dimensional linear subspace.
Consider the centered ellipsoid $\cF = T(B^{\ell})$.  Then, with probability of at least $1/2$,
$$ \cF \subseteq K \cap E \subseteq (1 + \eps) \cF. $$
\label{thm2}
\end{theorem}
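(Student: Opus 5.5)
The inclusion $\cF \subseteq K \cap E$ holds for every $T \in \cB$, since $T(B^\ell) \subseteq K$ by definition. The content of the statement is therefore the outer inclusion, and I would control the probability of its failure by a volume-transport argument on $\cB$, followed by a union bound over a net of directions.

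\textbf{Step 1 (reduction to finitely many directions).} Suppose the outer inclusion fails. Then some point $Tx \in K$ has $|x| > 1+\eps$. Write $x = |x|\,u$ with $u \in S^{\ell-1}$. By convexity, $K$ contains $T\bigl(\mathrm{conv}(B^\ell \cup \{(1+\eps)u\})\bigr)$.

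Fix an $\eps/10$-net $N \subseteq S^{\ell-1}$ of cardinality at most $(C/\eps)^{\ell}$. Choose $\theta \in N$ within $\eps/10$ of $u$. An elementary planar computation should show that the cone $\mathrm{conv}(B^\ell \cup \{(1+\eps)u\})$ contains $\mathrm{conv}(B^\ell \cup \{(1+\eps/2)\theta\})$. Define
$$\mathrm{Bad}_\theta = \bigl\{T \in \cB : T\bigl(\mathrm{conv}(B^\ell \cup \{(1+\eps/2)\theta\})\bigr) \subseteq K\bigr\}.$$
The failure event is then contained in $\bigcup_{\theta \in N} \mathrm{Bad}_\theta$.

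\textbf{Step 2 (one direction, via a linear change of variables).} Fix $\theta \in N$. I would find a linear map $A_\theta : \RR^\ell \to \RR^\ell$ with
$$A_\theta(B^\ell) \subseteq \mathrm{conv}\bigl(B^\ell \cup \{(1+\eps/2)\theta\}\bigr) \quad \text{and} \quad \det A_\theta \geq 1 + c\,\eps^{(\ell+1)/2}.$$
The natural candidate is an ellipsoid that is stretched along $\theta$ by a factor $1+a$ and slightly shrunk in the orthogonal directions. Then $T \in \mathrm{Bad}_\theta$ implies $TA_\theta(B^\ell) \subseteq K$, so $TA_\theta \in \cB$. The map $T \mapsto TA_\theta$ is linear on $\RR^{n\times\ell}$ with Jacobian $(\det A_\theta)^n$, since it acts on each of the $n$ rows. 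It maps $\mathrm{Bad}_\theta$ injectively into $\cB$, which gives
$$\PP(\mathrm{Bad}_\theta) \leq (\det A_\theta)^{-n} \leq \exp\bigl(-c\, n\, \eps^{(\ell+1)/2}\bigr).$$

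\textbf{Step 3 (union bound).} Summing over $N$, the failure probability is at most
$$(C/\eps)^{\ell}\exp\bigl(-c\,n\,\eps^{(\ell+1)/2}\bigr).$$
This is below $1/2$ once $n \geq C' \eps^{-(\ell+1)/2}\,\ell\,|\log\eps|$. The factor $\ell$ is absorbed by enlarging the constant to get the form $(C/\eps)^{(\ell+1)/2}|\log \eps|$ required in (\ref{eq_1502}). For $\ell = 1$ the statement is trivial or follows by the same argument.

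\textbf{Main obstacle.} The key step is the determinant gain in Step 2. One must show that the maximal-volume ellipsoid inside the ``ice-cream cone'' $\mathrm{conv}(B^\ell \cup \{(1+\eps)\theta\})$ beats $B^\ell$ by a factor $1 + c\eps^{(\ell+1)/2}$, matching the order of the volume of the added cap. This exponent is exactly what produces the $(\ell+1)/2$ in the theorem.

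I would first try an explicit family of ellipsoids: semi-axis $1+a$ along $\theta$ and $\sqrt{1-b}$ orthogonally, possibly translated toward $\theta$ and then re-centered using the symmetry of the problem. I would then optimize $a$ and $b$, using that the cone deviates from the ball only in a cap of angular radius of order $\sqrt\eps$. If a clean explicit ellipsoid is awkward, the fallback is a Brunn--Minkowski or log-concavity argument in the space of operators, applied to $\cB$ directly. The averages $\tfrac12(T + TA)$ stay in $\cB$, and a small-scale gain can be extracted from the log-concavity of $t \mapsto \Vol(\cB \cap \{\cdots\})$. This is presumably the route suggested by the introduction's mention of Brunn--Minkowski in the space of operators.
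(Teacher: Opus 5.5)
\textbf{There is a genuine gap in Step 2, and no choice of ellipsoid repairs it.} Steps 1 and 3 are sound. $\mathrm{Bad}_\theta$ is just the event $\|T\theta\|_K \le (1+\eps/2)^{-1}$, which is essentially the paper's set $\cB_\theta$, and your union bound is the paper's.

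The problem is that no linear $A_\theta$ with $|\det A_\theta| > 1$ and $A_\theta(B^\ell) \subseteq L := \mathrm{conv}(B^\ell \cup \{p\})$, where $p = (1+\eps/2)\theta$, exists.
\begin{itemize}
\item The support function of $L$ is $h_L(w) = \max\{1, \langle p, w\rangle\}$. If $x \in L$ with $|x| > 1$, testing $w = x/|x|$ gives $\langle p, x\rangle \ge |x|^2$, hence $\langle x, \theta\rangle > 0$.
\item It follows that $L \cap (-L) = B^\ell$.
\item Since $A_\theta(B^\ell)$ is origin-symmetric, $A_\theta(B^\ell) \subseteq L \cap (-L) = B^\ell$, so $|\det A_\theta| \le 1$.
\end{itemize}
Translating the ellipsoid toward $\theta$ is not available, because $TA_\theta$ must be a linear map sending the centered ball into $K$.

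The obstruction is not only asymmetry. Even if $K = -K$, so that $T(\mathrm{conv}(B^\ell \cup \{\pm p\})) \subseteq K$, John's theorem shows $B^\ell$ is already the maximal-volume ellipsoid of $\mathrm{conv}(B^\ell \cup \{\pm p\})$. Its contact set contains the band $\{|\langle u, \theta\rangle| \le 1/|p|\}$, which carries an isotropic measure when $\ell \ge 2$. Conceptually, a single $T \in \mathrm{Bad}_\theta$ is only guaranteed slack on a cap of directions of measure about $\eps^{(\ell-1)/2}$. One linear reparametrization of $\RR^\ell$ cannot turn slack in so few directions into a volume gain.

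\textbf{The missing idea} is that the gain $\eps^{(\ell+1)/2}$ is an averaged effect. You get it by combining the bad sets for many directions, not by transporting one of them. The paper argues as follows.
\begin{itemize}
\item Take a $\sqrt{\eps/8}$-net $v_1,\ldots,v_N$ with $N \le 2(200/\eps)^{(\ell-1)/2}$.
\item Show that $\frac{1}{N}\sum_j \cB_{v_j} \subseteq \left(1 - \frac{\eps}{16N}\right)\cB$. Given $w \in S^{\ell-1}$, the summand $T_i$ with $|w - v_i| \le \sqrt{\eps/8}$ satisfies $\|T_i w\|_K \le 1 - \eps/16$, via $2\|T_i w\|_K \le \|T_i v_i\|_K + \|T_i(2w - v_i)\|_K$. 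Every other summand satisfies $\|T_j w\|_K \le 1$.
\item All $\cB_v$ have the same volume, via $T \mapsto T \circ U$ with $U$ orthogonal.
\item Brunn--Minkowski in $\RR^{n\times\ell}$ then gives $\Vol(\cB_v) \le \left(1 - \frac{\eps}{16N}\right)^{n\ell}\Vol(\cB)$.
\end{itemize}
Since $\eps/N \approx \eps^{(\ell+1)/2}$, this realizes exactly your cap heuristic. Your fallback mentions Brunn--Minkowski, but averaging a single bad operator with a transformed copy $TA$ of itself does not produce this gain. What is needed is the Minkowski average of the bad sets over a full net of directions, together with the fact that these sets have equal volumes.
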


In the case where $K = [-1,1]^n$
we have $\cB = (B^{\ell})^n$, and thus $T$ takes the form (\ref{eq_1203})
where $\theta_1,\ldots, \theta_n$ are independent random vectors that are distributed uniformly in $B^{\ell}$.
Thus, for the cube, our probabilistic model coincides with one of the
constructions in Gordon, Litvak, Pajor and Tomczak-Jaegermann
\cite{GLPT}. 

\medskip 
The lower bound $1/2$ plays no particular role in Theorem \ref{thm2}, and it may be replaced by any fixed number in $(0,1)$,
at the expense of modifying the value of the universal constant $C > 0$. In fact, the proof of Theorem \ref{thm2}
yields a probabilistic lower bound  which is at least 
\begin{equation}  1 - \exp \left(-n (c \eps)^{(\ell+1)/2} \right). \end{equation}

\medskip The law of the random subspace $E$ from Theorem \ref{thm2} is highly dependent on the convex body~$K$.
Nevertheless, it is possible to ``fuse'' the two probability distributions on the Grassmannian corresponding
to two convex bodies $K_1, K_2 \subseteq \RR^n$ -- or any finite number of convex bodies --
and obtain a {\it simultaneous} Dvoretzky theorem. See Theorem \ref{thm4} below.
In particular, Theorem \ref{thm3} follows by  ``fusing'' the probability distributions corresponding to $K$
and to a Euclidean ball.

\medskip Let us discuss the optimality of our estimates.
Write $ N( \ell, \eps) $ for the minimal number such that if $n \geq N(\ell, \eps)$
then for any convex body $K \subseteq \RR^n$ with the origin in its interior, the conclusion of Theorem \ref{thm1} holds true.
A lower bound for $N(\ell, \eps)$ may be obtained by standard analysis of the case of the cube $K = [-1,1]^n$, see Lemma \ref{lem_cube_lower_bound} below. This yields the lower bound
\begin{equation}  N( \ell, \eps) \geq \left( \frac{(1+\eps)^2}{\eps(2 + \eps)} \right)^{\frac{\ell-1}{2}},  \label{eq_1332} \end{equation}
which roughly follows by estimating the spherical measure of a cap.
The exponent $(\ell+1)/2$ in (\ref{eq_1133}) is suboptimal
when $\ell = 2$, since the topological argument mentioned above yields
the better bound $C/\sqrt{\eps}$.
We do not know whether the exponent $(\ell+1)/2$ can be improved to
$(\ell-1)/2$ for arbitrary convex bodies when $\ell \geq 3$.

\medskip For the cube, the optimal exponent is $(\ell-1)/2$, since it suffices to choose $\theta_1,\ldots,\theta_n$ in~(\ref{eq_1203}) to form a $c\sqrt{\eps}$-net in $S^{\ell-1}$.
The random sphere construction
of~\cite{GLPT}, in which the $\theta_i$'s are independent and uniformly
distributed on the {\it unit sphere} $S^{\ell-1}$, yields the slightly weaker  bound $(C / \eps)^{(\ell-1)/2} |\log \eps|$
in the case of the cube. This bound is optimal up to a logarithmic factor. In contrast, taking the $\theta_i$ independently
and uniformly from the {\it unit ball} $B^\ell$ yields the weaker exponent $(\ell+1)/2$ in the case of the cube.
Although we have generalized the ball construction of \cite{GLPT} from the case of the cube
to arbitrary convex bodies, we do not know how to generalize the
sphere construction. 

\medskip 
Nevertheless, in the regime of fixed $\eps$ and large $\ell$, we can refine the proof
of Theorem~\ref{thm2} by focusing on the spherical measure of a cap.
This leads to the following:

\begin{proposition}
For any $0 < \eps < 1/2$,
$$
\lim_{\ell \to \infty} N(\ell,\eps)^{1/\ell}
=
\frac{1+\eps}{\sqrt{\eps(2+\eps)}}.
$$
\label{prop_1540}
\end{proposition}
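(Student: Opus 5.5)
The plan is to prove matching bounds: a lower bound from the cube, and an upper bound from a refined version of the proof of Theorem \ref{thm2}. Throughout write $\alpha = (1+\eps)/\sqrt{\eps(2+\eps)}$.

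The lower bound $\liminf_{\ell} N(\ell,\eps)^{1/\ell} \geq \alpha$ is immediate from (\ref{eq_1332}). It gives $N(\ell,\eps)^{1/\ell} \geq \alpha^{(\ell-1)/(2\ell)}$, and this tends to $\sqrt{\alpha}$. That is not $\alpha$, so I need to compare exponents carefully. We have $\alpha^2 = (1+\eps)^2/(\eps(2+\eps))$, so (\ref{eq_1332}) reads $N \geq (\alpha^2)^{(\ell-1)/2} = \alpha^{\ell-1}$. Hence $N^{1/\ell} \geq \alpha^{(\ell-1)/\ell} \to \alpha$, and the lower bound follows directly from Lemma \ref{lem_cube_lower_bound}.

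The upper bound $\limsup_\ell N(\ell,\eps)^{1/\ell} \leq \alpha$ is the real work. I would redo the proof of Theorem \ref{thm2} and keep the exact cap-measure constant rather than a crude $(C/\eps)^{(\ell+1)/2}$ bound.

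The structure of that argument, as I anticipate it, is as follows. $T$ is uniform in $\cB$. For a fixed direction $\theta\in S^{\ell-1}$ and a supporting functional of $K$, the failure event is that $|T\theta|_K$ is small in some direction, so that $K\cap E \not\subseteq (1+\eps)\cF$. By duality, this says the section of $K$ extends beyond $(1+\eps)T(B^\ell)$ at some point $Tx$ with $|x|>1+\eps$ and $Tx\in K$. The Brunn--Minkowski inequality in $\RR^{n\times\ell}$ (log-concavity of sections of $\cB$) bounds the probability that a given linear constraint on $T$ has slack. Via the map $T\mapsto T+ (Tx-\ldots)$-type shears, the relevant fraction of $\cB$ is controlled by the fraction of $B^\ell$ lying outside the hyperplane at distance $1/(1+\eps)$ from the origin. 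That is a spherical cap of angular radius $\arccos(1/(1+\eps))$, whose normalized measure behaves like $(\sin\phi)^{\ell} = \left(\sqrt{\eps(2+\eps)}/(1+\eps)\right)^{\ell} = \alpha^{-\ell}$ up to factors polynomial in $\ell$. Tracking the $n$ coordinates (or $n$ facets/extreme directions) gives roughly independent trials through a product/log-concavity bound. The failure probability for one direction is then about $(1-\alpha^{-\ell}\ell^{-O(1)})^{n}$. A union bound over a $\delta$-net of $S^{\ell-1}$, of size $(C/\delta)^\ell$, then succeeds once $n \geq \alpha^{\ell}\ell^{O(1)}\log((C/\delta)^\ell)$.

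The main obstacle is the net step. Using a $\delta$-net costs a loss in $\eps$: one must actually prove the cap estimate with $\eps' = \eps - \eta$ for a small $\eta$. So I would prove $N(\ell,\eps)\leq \alpha(\eps-\eta)^{\ell}\cdot e^{o(\ell)}$ with $\delta$ depending on $\eta$ only, not on $\ell$. Since the net cardinality is $e^{O(\ell\log(1/\delta))}$, it enters only logarithmically. Taking $\ell\to\infty$ gives $\limsup \leq \alpha(\eps-\eta)$, and letting $\eta\to 0$ together with continuity of $\alpha$ finishes the bound.

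The delicate point is making the Brunn--Minkowski cap estimate sharp to $e^{o(\ell)}$, rather than off by a constant factor $C^\ell$. My first attempt would be to use the exact marginal structure: the $\ell$-dimensional volume ratio of $\{x\in B^\ell : \langle x,\theta\rangle \geq 1/(1+\eps')\}$, evaluated precisely via the standard cap-volume asymptotics.
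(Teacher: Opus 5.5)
Your lower bound is correct and matches the paper's. After your self-correction, (\ref{eq_1332}) reads $N(\ell,\eps)\geq\alpha^{\ell-1}$ with $\alpha=(1+\eps)/\sqrt{\eps(2+\eps)}$, which gives $\liminf_\ell N(\ell,\eps)^{1/\ell}\geq\alpha$.

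The upper bound has a genuine gap. Everything rests on one estimate: for a fixed $v\in S^{\ell-1}$ and a threshold $b$ slightly above $1/(1+\eps)$, the probability $\PP(\|Tv\|_K\leq b)$ must be exponentially small in $n$, at a rate of exponential order $\alpha^{-\ell}$, for an \emph{arbitrary} $K$. You never derive this. The mechanism you describe treats the $n$ coordinates as ``roughly independent trials'', each succeeding with probability equal to the fraction of $B^\ell$ beyond a hyperplane at distance $1/(1+\eps)$. That is the structure of the cube only. There $\cB=(B^\ell)^n$, so the rows of $T$ are i.i.d.\ uniform in $B^\ell$. For general $K$, $\cB$ has no product structure, the marginals of $T$ have no such description, and the ``shear'' step is never specified. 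Brunn--Minkowski applied to a single direction does not rescue this either. The sets $A_t=\{T\in\cB\,;\,\|Tv\|_K\leq t\}$ satisfy $(1-\lambda)A_{t_0}+\lambda A_{t_1}\subseteq A_{(1-\lambda)t_0+\lambda t_1}$, so $t\mapsto\Vol(A_t)^{1/(n\ell)}$ is concave. By itself this only yields $\Vol(A_b)^{1/(n\ell)}\geq b\,\Vol(\cB)^{1/(n\ell)}$, which is the wrong direction.

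The missing idea is to average over \emph{directions} rather than over coordinates.
\begin{itemize}
\item Take $b<c$, $T\in\cB$ with $\|Tv\|_K\leq b$, and $v\cdot w\geq c$. Then $\|Tw\|_K\leq\inf_{t\geq0}\{tb+\sqrt{1+t^2-2tc}\}=1-\theta(b,c)$, with $\theta(b,c)>0$ independent of $\ell$.
\item Choose $T_v\in\cB_v(b)=\{T\in\cB\,;\,\|Tv\|_K\leq b\}$ for each $v$ and integrate. The triangle inequality gives $\|\int T_v w\,d\sigma(v)\|_K\leq 1-\sigma(C_w)\,\theta(b,c)$, where $C_w=\{v\,;\,v\cdot w\geq c\}$.
\item Hence the Minkowski integral $\int_{S^{\ell-1}}\cB_v(b)\,d\sigma(v)$ lies in $(1-\eta_\ell)\cB$, with $\eta_\ell=\sigma(C_w)\,\theta(b,c)$.
\item All $\cB_v(b)$ have equal volume, via $T\mapsto T\circ U$. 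Brunn--Minkowski therefore gives $\Vol(\cB_v(b))/\Vol(\cB)\leq(1-\eta_\ell)^{n\ell}$.
\end{itemize}
The spherical cap thus enters as a cap of directions $v$ of angular radius $\arccos c$, so $\eta_\ell^{1/\ell}\to\sqrt{1-c^2}$. Now use a net of fixed mesh $b-a$ with $1/(1+\eps)\leq a<b<c$. The union bound gives $N(\ell,\eps)\leq \ell+1+\log(10/(b-a))/\eta_\ell$, and letting $c\downarrow 1/(1+\eps)$ yields $\limsup_\ell N(\ell,\eps)^{1/\ell}\leq\alpha$. Your slack-and-net bookkeeping matches this part of the paper: fixed mesh, net size entering only logarithmically, and a limit in the parameters. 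What your sketch lacks is the volume estimate itself.
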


Theorem \ref{thm2} is proved in Section \ref{sec2}. By optimizing this proof in the regime of large $\ell$ we establish Proposition \ref{prop_1540}.
In Section \ref{sec3} we formulate and prove the simultaneous version of Theorem \ref{thm2} and deduce Theorem \ref{thm3}. 
In Section \ref{sec4} we analyze the radius $r$ of the approximating Euclidean ball provided by Theorem \ref{thm3}.

\medskip 
Throughout this paper we write $x \cdot y = \langle x, y \rangle = \sum_{i=1}^n x_i y_i$ for the standard scalar
product between $x, y \in \RR^n$. We write $|x| = \sqrt{\langle x, x \rangle}$. By $\log$ we refer to the natural logarithm,
and $\#(S)$ is the cardinality of the set $S$.
For $A, B \subseteq \RR^n$ and $t \in \RR$ we write $A+B = \{ x +y \, ; \, x \in A, y \in B \}$ for the Minkowski sum and $tA = \{ t x \, ; \, x \in A \}$.
The Grassmannian $G_{n,\ell}$ is the space of all $\ell$-dimensional
linear subspaces of $\RR^n$. We call its unique rotationally-invariant probability measure the uniform probability measure.
The uniform distribution on a convex body is its volume measure
normalized to have a total mass of one.
Throughout this text we use the letters $c, C, \tilde{C}, \tilde{c}$ to denote positive universal constants whose value may change from one line to the next.
These universal constants may be explicitly computed.

\medskip {\it Acknowledgements}. The idea to use the above probability distribution on the Grassmannian stemmed from a discussion between ChatGPT and the second named author.
ChatGPT was additionally used for finding references, for inspecting and improving the proofs and for language editing.
This research was supported by a grant from the Israel Science Foundation (ISF~$\# 674/24$).

\section{A simple proof using the Brunn-Minkowski inequality}
\label{sec2}

In this section we prove Theorem \ref{thm2}.
Let $K \subseteq \RR^n$ be a convex body with the origin in its interior, let $0 < \eps < 1/2$ and $1 \leq \ell \leq n$.
Consider the Minkowski functional
\begin{equation}  \| x \|_K = \inf \left \{ \lambda > 0 \, ; \, x \in \lambda K \right \} \qquad \qquad \qquad (x \in \RR^n). \label{eq_1658} \end{equation}
Since $K$ is convex, we have the triangle inequality $\| x + y \|_K \leq \| x \|_K + \| y \|_K$ for all $x,y \in \RR^n$.

\medskip
A subset $\cS \subseteq S^{\ell-1}$ is a  $\delta$-net
if for any $x \in S^{\ell-1}$ there exists $y \in \cS$ with $|x-y| \leq \delta$. It is well-known (see, e.g. Rogers \cite{rogers} or Lemma \ref{lem_1714} below)
that there exists a subset $\cS \subseteq S^{\ell-1}$ of cardinality
\begin{equation}  N \leq 2 \left(  \frac{5}{\sqrt{\eps / 8}} \right)^{\ell-1} = 2 \left( \frac{200}{\eps} \right)^{\frac{\ell-1}{2}} \label{eq_1756} \end{equation}
which is a $\delta$-net for $\delta = \sqrt{\eps / 8}$. Enumerate the elements of $\cS$ as $v_1,\ldots,v_N \in S^{\ell-1}$.

\medskip
Consider the unit ball in operator norm
\begin{equation}  \cB = \left \{ T: \RR^\ell \to \RR^n \, ; \, T(B^{\ell}) \subseteq K \right \}. \label{eq_212} \end{equation}
In other words,
 $T \in \cB$ if and only if \begin{equation} \| T \|_{op} \leq 1 \label{eq_1636_} \end{equation} where $\| T \|_{op} = \sup_{x \neq 0} \| T x\|_K / |x|$
is the operator norm of $T$.
The set $\cB \subseteq \RR^{n \times \ell}$ is convex, centrally-symmetric and compact. Thus, $\Vol(\cB)$ is a finite, positive number,
where here $\Vol$ is the Lebesgue measure in the $(\ell n)$-dimensional space $\RR^{n \times \ell}$.
For a unit vector $v \in S^{\ell-1}$ set
$$ \cB_v = \left \{ T \in \cB \, ; \, \| Tv \|_K \leq 1 - \frac{\eps}{4} \right \}. $$
Thus $\cB_v$ is the collection of all contractions $T \in \cB$ which furthermore contract the vector $v$ by at least $\eps/4$.
The key observation is that for certain choices of parameters, the convex set $\cB_v$ is much smaller
in volume than the convex set $\cB$ which contains it. Begin with the following lemma:

\begin{lemma} We have
$$ \frac{\cB_{v_1} + \cB_{v_2} + \ldots + \cB_{v_N}}{N} \subseteq \left( 1 - \frac{\eps}{16N} \right) \cB. $$
\label{lem_1708}
\end{lemma}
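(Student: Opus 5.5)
Fix $T_i \in \cB_{v_i}$ for $i=1,\ldots,N$ and set $T = \frac{1}{N}\sum_i T_i$. We must show $\|Tx\|_K \leq 1 - \frac{\eps}{16N}$ for every $x \in S^{\ell-1}$. By homogeneity this gives $T \in (1-\eps/(16N))\cB$.

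The plan is to split according to which net point is close to $x$. Fix $x \in S^{\ell-1}$ and choose $j$ with $|x - v_j| \leq \delta = \sqrt{\eps/8}$. The triangle inequality for $\|\cdot\|_K$ gives
$$ \|Tx\|_K \leq \frac{1}{N}\Bigl( \sum_{i\neq j} \|T_i x\|_K + \|T_j x\|_K \Bigr) \leq \frac{N-1}{N} + \frac{1}{N}\|T_j x\|_K, $$
since each $T_i$ lies in $\cB$. It therefore suffices to prove $\|T_j x\|_K \leq 1 - \eps/16$.

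The main step is this single-operator estimate: if $S \in \cB$, $\|Sv\|_K \leq 1-\eps/4$ and $|x-v| \leq \delta$ with $x,v$ unit vectors, then $\|Sx\|_K \leq 1 - \eps/16$. The naive bound $\|Sx\|_K \leq \|Sv\|_K + |x-v|$ loses a term of order $\sqrt{\eps}$, which is too large. The square-root choice of $\delta$ signals a second-order argument, so I will use a midpoint trick.

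Write $x = \frac{x+v}{2} + \frac{x-v}{2}$. Then:
\begin{itemize}
\item Since $x,v$ are unit vectors, $\frac{x+v}{2} = \lambda v + \mu u$, where $u$ is orthogonal to $v$. Here $|\mu| \leq |x-v|/2$ and $\lambda = (1+\langle x,v\rangle)/2 = 1 - |x-v|^2/4$.
\item An alternative, cleaner route is convexity along the chord: $\|S(\cdot)\|_K$ is convex and at most $|\cdot|$. Choose $w = 2x - v$ reflected appropriately, or write $x$ as a convex combination $x = (1-t)v + t y$ with $y$ in a ball of radius about $1$. This gives $\|Sx\|_K \leq (1-t)(1-\eps/4) + t|y|$.
\end{itemize}

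Concretely, let $y = v + (x-v)/t$, so that $x = (1-t)v + ty$. Then
$$ |y|^2 = 1 + \frac{2\langle v, x-v\rangle}{t} + \frac{|x-v|^2}{t^2} = 1 + |x-v|^2\Bigl(\frac{1}{t^2} - \frac{1}{t}\Bigr), $$
using $2\langle v, x-v\rangle = -|x-v|^2$. Hence
$$ \|Sx\|_K \leq (1-t)\Bigl(1-\frac{\eps}{4}\Bigr) + t\sqrt{1 + \delta^2\frac{1-t}{t^2}} \leq 1 - (1-t)\frac{\eps}{4} + \frac{\delta^2(1-t)}{2t}. $$
Taking $t = 1/2$ and $\delta^2 = \eps/8$ gives $\|Sx\|_K \leq 1 - \frac{\eps}{8} + \frac{\eps}{16} = 1 - \frac{\eps}{16}$.

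Substituting this into the first display yields $\|Tx\|_K \leq 1 - \eps/(16N)$, which proves the lemma. The main obstacle is this quadratic (midpoint/convexity) estimate; it is exactly what allows $\delta \sim \sqrt{\eps}$ rather than $\delta \sim \eps$. The remaining steps are routine.
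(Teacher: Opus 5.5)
Your proof is correct and is essentially the paper's argument. At $t=1/2$ your chord decomposition becomes $x=\tfrac12\bigl(v+(2x-v)\bigr)$, which is exactly the paper's estimate $2\|Sx\|_K\le\|Sv\|_K+\|S(2x-v)\|_K$ with $|2x-v|^2=1+2|x-v|^2$, followed by the same averaging over the net. The exploratory bullets (the $\lambda v+\mu u$ decomposition and ``reflected appropriately'') are unnecessary and can be dropped, since your concrete computation already proves the single-operator bound.
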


\begin{proof} We claim that for any $v,w \in S^{\ell-1}$ and $T \in \cB_{v}$,
\begin{equation} \| T w \|_K \leq 1 - \frac{\eps}{8} + \frac{|v-w|^2}{2}. \label{eq_1652} \end{equation}
Indeed, by the triangle inequality for $\| \cdot \|_K$ and the elementary inequality $\sqrt{1 + x} \leq 1 + x/2$,
\begin{align*} 2 \| T w \|_K & \leq \| T v \|_K + \| T(2w-v) \|_K \leq 1 - \eps/4 + |2 w - v| \\ & = 1 - \eps/4 + \sqrt{1 + 2 |v-w|^2} \leq 2 - \eps/4 + |v-w|^2, \end{align*}
proving (\ref{eq_1652}).

\medskip Fix $v \in S^{\ell-1}$.
 It follows from the definition of the net $\cS$ 
that there exists $i=1,\ldots,N$ with $|v - v_i| \leq \sqrt{\eps/8}$. Thus, according to (\ref{eq_1652}), for any $T_i \in \cB_{v_i}$,
\begin{equation}  \| T_i v \|_K \leq 1 - \frac{\eps}{8} + \frac{\eps/8}{2} = 1 - \frac{\eps}{16}. \label{eq_2105} \end{equation}
Consequently, if $T_j \in \cB_{v_j} \subseteq \cB$ for $j=1,\ldots,N$ and $T = (T_1 + T_2 + \ldots + T_N) / N$,
$$ \| T v \|_K = \frac{1}{N} \left \| \sum_{j=1}^N T_j v \right \|_K \leq \frac{1}{N} \left( \| T_i v \|_K + \sum_{j \neq i}  \left \| T_j v \right \|_K \right) \leq \frac{1}{N} \left( 1 - \frac{\eps}{16} + N-1 \right) = 1 - \frac{\eps}{16 N}. $$
We have thus proved that for any $v \in S^{\ell-1}$ and any
\begin{equation} T \in \frac{\cB_{v_1} + \cB_{v_2} + \ldots + \cB_{v_N}}{N} \label{eq_1729} \end{equation}
we have $$ \| T v \|_K \leq 1 - \frac{\eps}{16N}. $$ In other words, for any $T$ as in (\ref{eq_1729}) we have $\| T \|_{op} \leq 1 - \eps / (16N)$. This implies  the conclusion of the lemma. 
\end{proof}

\begin{proposition} For any $v \in S^{\ell-1}$,
$$ \frac{\Vol(\cB_v)}{\Vol(\cB)} \leq \exp \left(-\ell n (c \eps)^{\frac{\ell+1}{2}} \right), $$
where $c > 0$ is a universal constant.
\label{lem_1806}
\end{proposition}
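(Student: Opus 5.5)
The plan is to combine Lemma \ref{lem_1708} with the Brunn--Minkowski inequality in the $(\ell n)$-dimensional space $\RR^{n \times \ell}$. The one extra ingredient is that all the sets $\cB_{v}$, $v \in S^{\ell-1}$, have the same volume. I expect this to be the main point to verify, since without it the lemma only controls the geometric mean of the volumes $\Vol(\cB_{v_i})$, not each one separately.

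\medskip \textbf{Step 1: all $\cB_v$ have equal volume.} For an orthogonal map $U$ of $\RR^\ell$ we have $U(B^\ell) = B^\ell$. Hence $T \in \cB$ if and only if $TU \in \cB$, so $\cB$ is invariant under the linear map $R_U : T \mapsto TU$ of $\RR^{n \times \ell}$. Moreover $\| (TU)(U^{-1}v) \|_K = \|Tv\|_K$, so $R_U(\cB_v) = \cB_{U^{-1} v}$. The map $R_U$ acts on each of the $n$ rows of $T$ by an orthogonal map, so $|\det R_U| = 1$. Since $O(\ell)$ acts transitively on $S^{\ell-1}$, it follows that $\Vol(\cB_v) = \Vol(\cB_w)$ for all $v,w \in S^{\ell-1}$.

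\medskip \textbf{Step 2: Brunn--Minkowski.} Fix $v \in S^{\ell-1}$. The sets $\cB_{v_i}$ are compact and convex, and by Step 1 each has volume $\Vol(\cB_v)$. If $\Vol(\cB_v)=0$ there is nothing to prove, so assume it is positive. The Brunn--Minkowski inequality in dimension $\ell n$, in its multi-summand form $\Vol(\sum_i \lambda_i A_i)^{1/(\ell n)} \geq \sum_i \lambda_i \Vol(A_i)^{1/(\ell n)}$ with $\lambda_i = 1/N$, gives
$$ \Vol\left( \frac{\cB_{v_1} + \ldots + \cB_{v_N}}{N} \right)^{\frac{1}{\ell n}} \geq \Vol(\cB_v)^{\frac{1}{\ell n}}. $$
By Lemma \ref{lem_1708}, the left-hand side is at most $(1 - \eps/(16N)) \Vol(\cB)^{1/(\ell n)}$. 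Therefore
$$ \frac{\Vol(\cB_v)}{\Vol(\cB)} \leq \left(1 - \frac{\eps}{16 N}\right)^{\ell n} \leq \exp\left( - \frac{\ell n \eps}{16 N} \right). $$

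\medskip \textbf{Step 3: plug in the net size.} By (\ref{eq_1756}), $N \leq 2 (200/\eps)^{(\ell-1)/2}$. Hence
$$ \frac{\eps}{16N} \geq \frac{\eps}{32} \left(\frac{\eps}{200}\right)^{\frac{\ell-1}{2}} \geq (c\eps)^{\frac{\ell+1}{2}} $$
for a suitable universal $c>0$; for example, $c = 1/(32 \cdot 200)$ works, using $\ell \geq 1$. This yields the claimed bound $\exp(-\ell n (c\eps)^{(\ell+1)/2})$.
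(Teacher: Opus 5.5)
Your proposal is correct and follows the paper's proof essentially step for step: volumes of the $\cB_v$ are equalized via the volume-preserving map $T \mapsto TU$, then Brunn--Minkowski in $\RR^{n \times \ell}$ is combined with Lemma \ref{lem_1708}, and the net-size bound (\ref{eq_1756}) gives the exponent. The only cosmetic difference is that the paper uses the geometric-mean form of Brunn--Minkowski and you use the additive form, which is immaterial here since all the $\Vol(\cB_{v_i})$ are equal.
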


\begin{proof} Observe that for any $v, w \in S^{\ell-1}$,
\begin{equation}  \Vol(\cB_{v} ) = \Vol(\cB_{w} ).
\label{eq_1619} \end{equation}
Indeed, there exists an orthogonal transformation $U: \RR^\ell \to \RR^\ell$ with $U v = w$.
Clearly, $T \in \cB_{w}$ if and only if $T \circ U \in \cB_{v}$. Since $U$ is an orthogonal transformation, the map $T \mapsto T \circ U$ is volume
preserving on $\RR^{n \times \ell}$ and (\ref{eq_1619}) is proven. 
By (\ref{eq_1619}), Lemma \ref{lem_1708} and the Brunn-Minkowski inequality in $\RR^{n \times \ell}$,
$$ \Vol(\cB_{v_1}) = \left( \prod_{j=1}^N \Vol(\cB_{v_j}) \right)^{1/N} \leq \Vol \left( \frac{\cB_{v_1} + \cB_{v_2} + \ldots + \cB_{v_N}}{N} \right)
\leq \Vol \left[ \left( 1 - \frac{\eps}{16N} \right) \cB \right]. $$
Recall that $\dim(\RR^{n \times \ell}) = \ell n$. Therefore,
for any $v \in S^{\ell-1}$,
$$ \frac{\Vol(\cB_v)}{\Vol(\cB)}
= \frac{\Vol(\cB_{v_1})}{\Vol(\cB)} \leq \left( 1 - \frac{\eps}{16N} \right)^{\ell n} \leq \exp \left( -\frac{\eps \ell n}{16 N} \right) \leq \exp \left( -5 \ell n  (\eps / 200)^{(\ell + 1)/2}  \right), $$
where we used (\ref{eq_1756}) in the last passage.
\end{proof}

From Proposition \ref{lem_1806}, the estimate (\ref{eq_1756}) and the union bound, we immediately obtain:

\begin{corollary} Let $T \in \RR^{n \times \ell}$ be a random matrix, distributed uniformly in the convex body $\cB$. Then,
\begin{equation}  \PP \left( T \in \bigcup_{i=1}^N \cB_{v_i} \right) \leq  N \cdot \exp \left( -\ell n (c \eps)^{\frac{\ell+1}{2}} \right) \leq 2 \left( \frac{\tilde{C}}{\eps} \right)^{\frac{\ell-1}{2}} \cdot \exp \left( -\ell n (c \eps)^{\frac{\ell+1}{2}} \right), \label{eq_1810} \end{equation}
where $\tilde{C}, c > 0$ are universal constants.
\label{cor_1841}
\end{corollary}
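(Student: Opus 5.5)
The corollary follows from the union bound over the $N$ sets $\cB_{v_1},\ldots,\cB_{v_N}$, combined with Proposition \ref{lem_1806} and the cardinality estimate (\ref{eq_1756}). Since $T$ is uniformly distributed in $\cB$ and each $\cB_{v_i}\subseteq\cB$, we have
$$ \PP(T\in\cB_{v_i})=\frac{\Vol(\cB_{v_i})}{\Vol(\cB)}. $$
Summing over $i=1,\ldots,N$ gives
$$ \PP\left(T\in\bigcup_{i=1}^N\cB_{v_i}\right)\le\sum_{i=1}^N\frac{\Vol(\cB_{v_i})}{\Vol(\cB)}. $$
Each summand is at most $\exp(-\ell n(c\eps)^{(\ell+1)/2})$ by Proposition \ref{lem_1806}, applied with $v=v_i$. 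This yields the first inequality in (\ref{eq_1810}).

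For the second inequality, substitute the bound $N\le 2(200/\eps)^{(\ell-1)/2}$ from (\ref{eq_1756}). This gives the claim with $\tilde C=200$ and the same $c$ as in Proposition \ref{lem_1806}; from its proof one may take $c=1/200$, since $5\ell n(\eps/200)^{(\ell+1)/2}\ge \ell n(\eps/200)^{(\ell+1)/2}$.

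There is no genuine obstacle. The only points to check are that the probabilities are correctly expressed as volume ratios, which holds because $T$ is uniform on $\cB$, and that the constants $\tilde C$ and $c$ are universal, independent of $n$, $\ell$ and $K$.
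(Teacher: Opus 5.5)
Your proof is correct and follows the paper's route exactly: a union bound over the $N$ net directions, Proposition~\ref{lem_1806} for each volume ratio, and the cardinality bound (\ref{eq_1756}) for $N$. Your explicit constants $\tilde{C}=200$ and $c=1/200$ are consistent with the last line of the proof of Proposition~\ref{lem_1806}.
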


\begin{proof}[Proof of Theorem \ref{thm2}]
Since $\ell \leq n$ while $\cB \subseteq \RR^{n \times \ell}$ is a convex body, the linear 
map $T: \RR^{\ell} \rightarrow \RR^n$ is almost-surely injective, and hence $E = T(\RR^\ell)$ is almost-surely an $\ell$-dimensional linear subspace.
We may select a large universal constant $C > 0$ in (\ref{eq_1502}) so that
inequality (\ref{eq_1502}) implies that
$$ 1 + \log 2 + \frac{\ell-1}{2} \log \frac{\tilde{C}}{\eps}
\leq \ell n (c \eps)^{\frac{\ell+1}{2}}, $$
where $\tilde{C}, c > 0$ are the universal constants from (\ref{eq_1810}).
Thus, the expression on the right hand side of (\ref{eq_1810}) is at most $1/e$.
Consequently, by Corollary \ref{cor_1841}, with probability of at least
$1 - 1/e \geq 1/2$ the following holds: The operator $T$ belongs to $\cB$, yet
\begin{equation}  T \not \in \bigcup_{i=1}^N \cB_{v_i}. \label{eq_2110}
\end{equation}
Let us fix $T \in \cB$ satisfying (\ref{eq_2110}).
We claim that for any $v \in S^{\ell-1}$,
\begin{equation}
\| T v \|_K \geq \frac{1}{1+\eps}.
\label{eq_2107}
\end{equation}
Indeed, by the net property of $\cS$, there exists
$i = 1, \ldots, N$ with $|v - v_i| \leq \sqrt{\eps/8}$.
Since $T \in \cB \setminus \cB_{v_i}$, by the triangle inequality and  (\ref{eq_1636_}),
\begin{align*}
\| T v \|_K
&\geq 2\| T v_i \|_K - \| T(2v_i-v) \|_K
> 2(1-\eps/4) - |2 v_i - v| \\
& = 2(1-\eps/4) - \sqrt{1 + 2 |v_i -v|^2} \geq 1-\frac{\eps}{2}-|v_i-v|^2
\geq 1-\frac{5\eps}{8}
\geq \frac{1}{1+\eps}.
\end{align*}
Since $E =T(\RR^{\ell})$, we learn from (\ref{eq_2107}) that the ellipsoid $\cF = T(B^{\ell})$ satisfies
$$ \cF \supseteq \frac{1}{1+\eps} (K \cap E). $$
Since $T \in \cB$ we already know that $\cF \subseteq K$ and therefore $\cF \subseteq K \cap E$. To summarize,
$$ \cF \subseteq K \cap E \subseteq (1+\eps)\cF, $$ completing the proof.
\end{proof}

\subsection{Optimizing the proof for fixed $\eps$ and large $\ell$}

\begin{proof}[Proof of Proposition \ref{prop_1540}]
The lower bound follows from (\ref{eq_1332}). For the upper bound, 
we optimize the previous proof, emphasizing the role of the spherical measure of the cap.
Let $0 < \eps < 1/2$. Fix $0 < a < b < c < 1$ with $a \geq 1/(1+\eps)$. 
 Set
$$
\cB_v(b)= \left\{T\in\cB\,;\,\|Tv\|_K\leq b \right \} \qquad \qquad \qquad (\text{for} \ v \in S^{\ell-1}).
$$
This is a family of convex bodies in $\RR^{n \times \ell}$, all of the same volume, as explained in the 
proof of (\ref{eq_1619}). For $w \in S^{\ell-1}$ consider the spherical cap 
$$ C_w = \left \{ v \in S^{\ell-1} \, ; \, v \cdot w \geq c \right \}. $$
For $w \in S^{\ell-1}, v \in C_w$ and $T\in\cB_v(b)$, by the triangle inequality,
\begin{align*}
\|T w\|_K
& \leq \inf_{t\geq0} \left \{  t \| T v \|_K + \| T(w - t v) \|_K \right \} 
\\ &\leq \inf_{t\geq0} \left \{ tb+|w-tv| \right \} 
 \leq \inf_{t\geq0} \left \{ tb+ \sqrt{1 + t^2 - 2 t c}  \right \} 
\leq 1 - \theta(b,c),
\end{align*} 
for some $\theta(b,c) > 0$ depending solely on $b$ and $c$. 
Suppose that with each $v \in S^{\ell-1}$ we associate an operator $T_v \in \cB_v(b)$ in a measurable way. 
Write $\sigma$ for the uniform probability measure on~$S^{\ell-1}$. 
Then for any $w \in S^{\ell-1}$,
\begin{align*}
\left \| \int_{S^{\ell-1}} T_v w \, d \sigma(v)\right \|_K \leq 
 \int_{S^{\ell-1}} \left \| T_v w \right \|_K \, d \sigma(v) \leq 
\sigma \left ( S^{\ell-1} \setminus C_w \right) + \sigma(C_w) \cdot (1 - \theta(b,c)). 
\end{align*} 
Consequently, by averaging via Minkowski addition,\footnote{The integral of convex bodies in (\ref{eq_1431}) is defined either by integrating their support
functions, or else equivalently by taking limits of finite weighted
Minkowski sums.}
\begin{equation} 
\int_{S^{\ell-1}}\cB_v(b)\,d\sigma(v)
\subseteq (1 - \eta_{\ell}) \cB
\label{eq_1431} \end{equation}
where
\begin{equation}  \eta_{\ell} = \sigma(C_v) \cdot \theta(b,c). \label{eq_1515} \end{equation}
 The Brunn-Minkowski inequality in $\RR^{n \times \ell}$ therefore implies that for any $v \in S^{\ell-1}$, 
\begin{equation}
\frac{\Vol(\cB_v(b))}{\Vol(\cB)}
\leq (1 - \eta_{\ell})^{n\ell}
\leq \exp \left( -n\ell \eta_{\ell} \right). 
\label{eq_1504} \end{equation}
Choose a $(b-a)$-net
$\tilde{\cS}\subseteq S^{\ell-1}$ whose cardinality is at most $\left(10 / (b-a) \right)^\ell$.
We claim that
\begin{equation}
N(\ell,\eps)\leq \ell+1+\frac{\log(10/(b-a))}{\eta_{\ell}}. \label{eq_1512}
\end{equation} 
Indeed, assume that $n$ exceeds the expression on the right-hand side of (\ref{eq_1512}). 
Let $T$ be uniformly distributed in $\cB$. By (\ref{eq_1504}) and the union bound,
$$
\PP\left(T \in \bigcup_{v \in \tilde{\cS}} \cB_v(b) \right)
\leq \left( \frac{10}{b-a} \right)^\ell \cdot e^{-n\ell\eta_{\ell}}
\leq \exp\left[\ell\log\left(\frac{10}{b-a}\right)-n\ell\eta_{\ell} \right] < 1,
$$
where in the last passage we used the fact that $n$ exceeds the expression on the right-hand side of~(\ref{eq_1512}). 
Thus there exists $T \in \cB \setminus \bigcup_{v \in \tilde{\cS}} \cB_v(b)$.
For any $u\in S^{\ell-1}$, choose $v\in\tilde{\cS}$ with $|u-v|\leq b-a$.
Then $\|Tu\|_K\geq\|Tv\|_K-|u-v|>a \geq 1/(1+\eps)$. 
Thus, the ellipsoid $\cF=T(B^\ell)$ satisfies
$$
\cF\subseteq K\cap E\subseteq(1+\eps)\cF,
$$
where $E = T(\RR^{\ell})$. This proves (\ref{eq_1512}). 
Next, it follows from (\ref{eq_1515}) 
and a standard computation as in (\ref{eq_1457}) below that 
$$ 
\lim_{\ell\to\infty}\eta_{\ell}^{1/\ell}=
\lim_{\ell \to \infty} \sigma(C_v)^{1/\ell} = \sqrt{1-c^2}
\cdot \lim_{\ell \to \infty}  \left( 
\frac{\int_{B^{\ell-1}} \frac{dy}{\sqrt{1-(1-c^2)|y|^2}}}
      {\int_{B^{\ell-1}} \frac{dy}{\sqrt{1-|y|^2}}} \right)^{1/\ell} = \sqrt{1-c^2}. $$
Consequently, by (\ref{eq_1512}),
\begin{equation}
\limsup_{\ell\to\infty}N(\ell,\eps)^{1/\ell}
\leq\frac{1}{\sqrt{1-c^2}}.
\label{eq_1518} \end{equation}
Since $a < b < c$ were arbitrary numbers in $(0,1)$ with $a \geq 1 / (1+\eps)$, 
the $\limsup$ in (\ref{eq_1518}) is at most $1 / \sqrt{ 1 - (1+\eps)^{-2}}$, 
and the proof is complete.
\end{proof}

\section{Fusing probability distributions on the Grassmannian}
\label{sec3}

In this section we prove a simultaneous version of Theorem \ref{thm2} and deduce Theorem \ref{thm3} as a corollary. 
We begin by constructing the ``$\ell$-balancing scales'' of a finite family of convex bodies
in $\RR^n$. As before, for a convex body $K \subseteq \RR^n$ containing the origin in its interior, we denote
\begin{equation}  \cB = \cB_{\ell}(K) = \left \{ T: \RR^\ell \to \RR^n \, ; \, T(B^{\ell}) \subseteq K \right \}. \label{eq_1539} \end{equation}
Clearly, for any $K_1,\ldots,K_m \subseteq \RR^n$ and $t > 0$,
\begin{equation} \cB_{\ell}(K_1) \cap \cB_{\ell}(K_2) \cap \ldots \cap \cB_{\ell}(K_m) = \cB_{\ell}(K_1 \cap K_2 \cap \ldots \cap K_m) \label{eq_1629} \end{equation}
while  $\cB_{\ell}(t K_1) = t \cB_{\ell}(K_1)$.
We write $\RR^n_+ = \left \{ (x_1,\ldots,x_n) \in \RR^n \, ; \, \forall i, \ x_i > 0 \right \}$.

\begin{lemma} Let $n, m \geq 1, 1 \leq \ell \leq n$, and let $K_1,\ldots,K_m \subseteq \RR^n$ be convex bodies, each containing the origin in its interior.
Consider the optimization problem
$$ \sup_{t_1,\ldots, t_m > 0} \left[ \frac{\Vol \left(  \cB_{\ell} \left(\bigcap_{j=1}^m t_j K_j \right) \right)}{\left( \prod_{j=1}^m t_j \right)^{n \ell/m}} \right]. $$
Then the supremum is attained at some $r_1,\ldots,r_m > 0$. We refer to any $r_1,\ldots,r_m > 0$ attaining the maximum 
as ``$\ell$-balancing scales'' for $K_1,\ldots,K_m \subseteq \RR^n$.
\label{def_1431}
\end{lemma}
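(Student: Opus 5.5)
We need to show the function
$$F(t_1,\ldots,t_m) = \Vol\Big(\cB_\ell\Big(\bigcap_j t_j K_j\Big)\Big) \Big/ \Big(\prod_j t_j\Big)^{n\ell/m}$$
attains its supremum on $\RR^m_+$. The plan is to use scale invariance to reduce to a slice, then show $F$ is continuous and tends to zero at the boundary of that slice.

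\emph{Scale invariance.} Since $\cB_\ell(sK) = s\,\cB_\ell(K)$ and $\dim \RR^{n\times\ell} = n\ell$, we have $F(st_1,\ldots,st_m) = F(t_1,\ldots,t_m)$ for every $s>0$. It therefore suffices to maximize over the slice $\Sigma = \{ t \in \RR^m_+ \,;\, \prod_j t_j = 1\}$, or in logarithmic coordinates $u_j = \log t_j$, over the hyperplane $\sum_j u_j = 0$. On $\Sigma$, $F(t)$ is just $\Vol(\cB_\ell(\bigcap_j t_j K_j))$.

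\emph{Continuity.} The map $t \mapsto \bigcap_j t_j K_j$ is continuous in the Hausdorff metric on $\RR^m_+$. Since all $K_j$ contain a common ball $\rho B^n$ and lie in a common ball $R B^n$, each intersection is sandwiched between balls whose radii are locally bounded above and below. The set $\cB_\ell(L)$ depends continuously on $L$ in this regime: if $L \subseteq L' \subseteq (1+\delta)L$ then $\cB_\ell(L) \subseteq \cB_\ell(L') \subseteq (1+\delta)\cB_\ell(L)$. Together with the local sandwiching, this gives continuity of the volume, so $F$ is continuous and positive on $\RR^m_+$.

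\emph{Decay at the boundary.} We show $F \to 0$ as $t$ leaves compact subsets of $\Sigma$. If $t\in\Sigma$ leaves every compact set, then $\min_j t_j \to 0$, since the constraint $\prod_j t_j = 1$ forces some coordinate to become small. Now $\bigcap_j t_j K_j \subseteq (\min_j t_j) R B^n$, hence $\cB_\ell(\bigcap_j t_j K_j) \subseteq (\min_j t_j)R\,\cB_\ell(B^n)$ and
$$\Vol \le \Big(R\min_j t_j\Big)^{n\ell}\Vol(\cB_\ell(B^n)) \to 0.$$
(Working on $\Sigma$ avoids having to handle the denominator separately.) Fix any point $t^0\in\Sigma$ with $F(t^0)>0$. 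The set $\{t\in\Sigma \,;\, F(t)\ge F(t^0)\}$ is then contained in $\{\min_j t_j \ge \delta\}\cap\Sigma$ for some $\delta>0$. On $\Sigma$ this forces $t_j \le \delta^{-(m-1)}$ for all $j$, so the set is closed and bounded away from the boundary, hence compact. A continuous function attains its maximum on a compact set, and the maximizer there is a global maximizer on $\RR^m_+$.

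The main obstacle is the continuity step, specifically making the Hausdorff continuity of $t \mapsto \bigcap_j t_j K_j$ rigorous. I would do it concretely: if $|t_j/t'_j - 1| \le \delta$ for all $j$, then $t'_j K_j \subseteq (1+\delta)t_j K_j$ because $0\in K_j$. Hence $\bigcap_j t'_j K_j \subseteq (1+\delta)\bigcap_j t_j K_j$, and symmetrically in the other direction. This gives the volume ratio within $(1+\delta)^{\pm n\ell}$, which is exactly continuity.
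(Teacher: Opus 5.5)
Your proof is correct and follows essentially the same route as the paper: use $0$-homogeneity of $F$ to restrict to the hypersurface $\{\prod_j t_j = 1\}$, note that $F$ is positive and continuous there, and show that $F \to 0$ as $t$ escapes to infinity on this hypersurface via a bound of the form $(\min_j t_j)^{n\ell}$ times a constant. The paper takes the constant to be $\max_j \Vol(\cB_\ell(K_j))$, whereas you use a common bounding ball $RB^n$, and your explicit sandwich argument supplies the continuity that the paper treats as clear; in that argument, state the hypothesis as $\max\{t_j/t_j',\, t_j'/t_j\} \le 1+\delta$ so that $t_j'K_j \subseteq (1+\delta)t_jK_j$ follows directly.
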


\begin{proof} Set $M = n \ell$ and $Q_i = \cB_{\ell}(K_i) \subseteq \RR^{n \times \ell} \cong \RR^M$. Thus $Q_1,\ldots,Q_m \subseteq \RR^M$
are centrally-symmetric convex bodies. Thanks to (\ref{eq_1629}), we need to show that 
the supremum of the function 
\begin{equation}  F(t_1,\ldots,t_m) =  \frac{\Vol \left(  \bigcap_{j=1}^m t_j Q_j \right)}{\left( \prod_{j=1}^m t_j \right)^{M/m}} 
\label{eq_1744} \end{equation}
on the orthant $\RR^m_+$, is attained. Observe that $F$ is $0$-homogeneous, i.e., $F(\lambda t) = F(t)$ for all $t \in \RR^m_+$ and $\lambda > 0$. Hence it suffices 
to show that the supremum of  $F$ on the hypersurface $$ H = \left \{ (t_1,\ldots,t_m) \in \RR^m_+ \, ; \, \prod_{i=1}^m t_i = 1 \right \} $$ is attained.
Note that 
\begin{equation}
 \Vol \left( \bigcap_{j=1}^m t_j Q_j \right) \leq \min_{j=1,\ldots,m} \Vol \left( t_j Q_j \right) \leq \left( \min_j t_j \right)^M \cdot \max_j \Vol(Q_j). 
 \label{eq_1547_} \end{equation}
The function $F$ is clearly positive and continuous on $H$, and it follows from (\ref{eq_1547_}) that $F(t)$ tends to zero 
as $t \to \infty$ in the hypersurface $H$. Hence the maximum of $F$ is attained.
\end{proof}

If $r_1,\ldots,r_m$ are $\ell$-balancing scales for $K_1,\ldots,K_m \subseteq \RR^n$, then 
the same holds for $\lambda r_1,\ldots, \lambda r_m$ for any $\lambda > 0$. We remark that 
$\ell$-balancing scales are not necessarily unique, not even up to a multiplication by a scalar. For example,
consider the case where 
$$
K_1=[-2,2]\times[-1,1] \subseteq \RR^2,
\qquad \text{and} \qquad
K_2=[-1,1]\times[-2,2] \subseteq \RR^2.
$$
A computation shows that in this case, for $\ell=1,2$,
$$
F(1,t)
=
\Vol(B^\ell)^2\min \left \{2t,1,2/t \right\}^{\ell}.
$$
The maximum of this function on $\RR_+$ is not uniquely attained.

\begin{remark} 
By differentiating the function $F$ in (\ref{eq_1744}), we obtain a necessary condition for $\ell$-balancing scales 
in terms of the {\it cone volume measure}. Let $r_1,\ldots,r_m$ be $\ell$-balancing scales, and set
$$
\cB=\cB_\ell\left(\bigcap_{i=1}^m r_iK_i\right),
\qquad
\Gamma_i=\partial\cB\cap\partial\bigl(r_i\cB_\ell(K_i)\bigr).
$$ 
If $F$ is differentiable at $(r_1,\ldots,r_m)$, then the vanishing of the gradient of  $F$ amounts 
to the condition that the cone volume measure on $\partial \cB$ attributes the same amount 
of  mass to all of the $\Gamma_i$'s.
The appearance 
of the cone volume measure and the non-uniqueness example above are also reminiscent of the log-Minkowski problem, 
see B\"or\"oczky, Lutwak, Yang and Zhang \cite{BLYZ, BLYZ2} and references therein.
\end{remark}

A useful feature of $\ell$-balancing scales $r_1,\ldots,r_m$ 
is that assuming that $0 \leq t_j \leq r_j$ for all $j$, the volume of 
$ \cB_{\ell}(\cap_{j=1}^m t_j K_j)$ drops 
substantially when one of the $t_j$ drops below $r_j$.

\begin{lemma} 
Let $n, m \geq 1, 1 \leq \ell \leq n$, and let $K_1,\ldots,K_m \subseteq \RR^n$ be convex bodies, each containing the origin in its interior.
Assume that $1,\ldots,1$ are $\ell$-balancing scales for these convex bodies. Then for any $i=1,\ldots,m$ and $t > 0$,
$$
\Vol \left(     \cB_{\ell} \left( t  K_i \right)     \cap \bigcap_{j \neq i} \cB_{\ell} \left(  K_j \right)
\right) \leq t^{n \ell/m} \cdot \Vol \left(     \bigcap_{j=1}^m \cB_{\ell} \left(  K_j \right) \right).
$$
\label{cor_931}
\end{lemma}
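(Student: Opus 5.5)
The plan is to read the inequality directly off the maximality of the balancing scales; no geometric input beyond the definitions should be needed. As in the proof of Lemma \ref{def_1431}, set $M = n\ell$ and $Q_j = \cB_{\ell}(K_j) \subseteq \RR^{n\times\ell}$. By (\ref{eq_1629}) and the homogeneity relation $\cB_\ell(tK) = t\cB_\ell(K)$, for any $t_1,\ldots,t_m>0$ we have
$$ \cB_{\ell}\Bigl(\bigcap_{j=1}^m t_j K_j\Bigr) = \bigcap_{j=1}^m t_j Q_j . $$
Hence the quantity maximized in Lemma \ref{def_1431} is exactly the function $F$ from (\ref{eq_1744}).

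The hypothesis that $1,\ldots,1$ are $\ell$-balancing scales says that $F$ attains its supremum over $\RR^m_+$ at $(1,\ldots,1)$. In other words, for every $(t_1,\ldots,t_m)\in\RR^m_+$,
$$ \frac{\Vol\bigl(\bigcap_{j} t_j Q_j\bigr)}{\bigl(\prod_j t_j\bigr)^{M/m}} \leq F(1,\ldots,1) = \Vol\Bigl(\bigcap_{j=1}^m Q_j\Bigr). $$

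Now fix $i$ and $t>0$, and specialize to $t_i = t$ and $t_j = 1$ for $j\neq i$. Then $\prod_j t_j = t$, and the set in the numerator is $tQ_i \cap \bigcap_{j\neq i} Q_j$, which by the identities above equals $\cB_\ell(tK_i)\cap\bigcap_{j\neq i}\cB_\ell(K_j)$. Multiplying the inequality through by $t^{M/m} = t^{n\ell/m}$ gives exactly the claimed bound.

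There is no real obstacle here; the argument is a direct substitution. The only points to check are that $F$ is evaluated at a genuine point of the open orthant $\RR^m_+$, which holds since $t>0$, and that no case split into $t\leq 1$ versus $t>1$ is needed. The latter is true because maximality at $(1,\ldots,1)$ is a global statement over the whole orthant.
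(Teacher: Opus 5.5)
Your proposal is correct and is the same argument as the paper's. Both evaluate the balancing-scale maximization at $t_i = t$, $t_j = 1$ for $j \neq i$, compare with the value at $(1,\ldots,1)$, and multiply through by $t^{n\ell/m}$.
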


\begin{proof} This follows from the defining property of  $\ell$-balancing scales from Lemma \ref{def_1431}. Indeed, 
by the maximizing property, 
$$
\frac{   \Vol \left(
        \cB_{\ell} \left( t  K_i \right)
        \cap \bigcap_{j \neq i} \cB_{\ell} \left( K_j \right)
    \right)}{t^{n \ell/m}} \leq \frac{   \Vol \left(
\bigcap_{j=1}^m \cB_{\ell} \left( K_j \right)
    \right)}{1^{n \ell/m}}. $$
\end{proof}

The following theorem is the simultaneous version of Theorem \ref{thm2}. 
It shows that when $K = \cap_i (r_i K_i)$ for appropriate $\ell$-balancing scales, the probability distribution 
on the Grassmannian associated with $K$ also supplies nearly-ellipsoidal sections for all of the convex bodies $K_1, \ldots,  K_m$.

\begin{theorem} Let $n, m \geq 1, 1 \leq \ell \leq n, 0 < \eps < 1/2$, and let $K_1,\ldots,K_m \subseteq \RR^n$ be convex bodies, each containing the origin in its interior.
Let $r_1,\ldots,r_m > 0$ be $\ell$-balancing scales for $K_1,\ldots,K_m$.
Assume that
\begin{equation}  n \geq \left( \frac{C}{\eps} \right)^{\frac{\ell+1}{2}} \left( m |\log \eps| + \frac{m \log m}{\ell} \right), \label{eq_1800} \end{equation}
where $C > 0$ is a universal constant. Set $$ K = (r_1 K_1) \cap (r_2 K_2) \cap \ldots \cap (r_m K_m). $$

\medskip Write $\cB \subseteq \RR^{n \times \ell}$ for the convex body consisting of all $T \in \RR^{n \times \ell}$ with $T(B^{\ell}) \subseteq K$. Let $T \in \RR^{n \times \ell}$ be a random matrix, distributed uniformly in $\cB$. Set $E = T(\RR^{\ell})$, 
which is almost-surely an $\ell$-dimensional linear subspace.
Consider the centered ellipsoid $\cF = T(B^{\ell})$.  Then, with probability of at least $1/2$, 
for  $1 \leq i \leq m$ we have
$$ \cF \subseteq (r_i K_i) \cap E \subseteq (1 + \eps) \cF. $$
\label{thm4}
\end{theorem}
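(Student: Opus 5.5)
By rescaling, $r_1 = \ldots = r_m = 1$ may be assumed. Indeed, replacing $K_i$ by $r_i K_i$ turns the balancing scales into $1,\ldots,1$: the functional in Lemma \ref{def_1431} changes only by the constant factor $(\prod r_j)^{n\ell/m}$. So $K = \bigcap_i K_i$ and $\cB = \bigcap_i \cB_\ell(K_i)$.

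Since $\cF = T(B^\ell) \subseteq K \subseteq K_i$ automatically, the task is the lower bound $\|Tv\|_{K_i} \geq 1/(1+\eps)$ for all $v \in S^{\ell-1}$ and all $i$. I follow the proof of Theorem \ref{thm2}, with the sets
$$ \cB_v^{(i)} = \left\{ T \in \cB \, ; \, \|Tv\|_{K_i} \leq 1 - \frac{\eps}{4} \right\}, $$
with $v$ ranging over the same $\sqrt{\eps/8}$-net $v_1,\ldots,v_N$. If $T \notin \bigcup_{i,j} \cB^{(i)}_{v_j}$, then the deterministic argument at the end of the proof of Theorem \ref{thm2} applies verbatim with $\|\cdot\|_{K_i}$ in place of $\|\cdot\|_K$. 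It uses only $\|Tx\|_{K_i} \leq |x|$, which holds since $T \in \cB \subseteq \cB_\ell(K_i)$. The conclusion is $\|Tv\|_{K_i} \geq 1/(1+\eps)$ for every $v$, as needed.

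The main step is the volume estimate, and this is where the balancing scales enter. Rotation invariance gives $\Vol(\cB^{(i)}_{v_j}) = \Vol(\cB^{(i)}_{v_1})$ for all $j$. The computation in Lemma \ref{lem_1708} works in the norm $\|\cdot\|_{K_i}$, using that elements of $\cB$ are $K_i$-contractions, and gives
$$ \frac1N\sum_{j} \cB^{(i)}_{v_j} \subseteq \cB_\ell\left( \left(1 - \tfrac{\eps}{16N}\right) K_i \right) \cap \bigcap_{k\neq i}\cB_\ell(K_k). $$
Here the intersection with the other $\cB_\ell(K_k)$ survives because each $\cB^{(i)}_{v_j} \subseteq \cB_\ell(K_k)$, and these sets are convex. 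Brunn--Minkowski and Lemma \ref{cor_931} with $t = 1 - \eps/(16N)$ then give
$$ \frac{\Vol(\cB^{(i)}_v)}{\Vol(\cB)} \leq \left(1 - \frac{\eps}{16N}\right)^{n\ell/m} \leq \exp\left( - \frac{\ell n}{m} (c\eps)^{\frac{\ell+1}{2}} \right). $$
This is exactly Proposition \ref{lem_1806} with $n$ replaced by $n/m$. The loss of the factor $m$ in the exponent is the price of using the balancing property rather than a plain homothety of $\cB$. This step is the conceptual heart of the proof; after it, everything is bookkeeping.

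Finally, a union bound over the $mN$ events gives
$$ \PP\left(T \in \bigcup_{i,j} \cB^{(i)}_{v_j}\right) \leq 2m\left(\frac{\tilde C}{\eps}\right)^{\frac{\ell-1}{2}} \exp\left(-\frac{\ell n}{m}(c\eps)^{\frac{\ell+1}{2}}\right). $$
To make this at most $1/2$ we need
$$ \frac{\ell n}{m}(c\eps)^{\frac{\ell+1}{2}} \geq \log(4m) + \frac{\ell-1}{2}\log\frac{\tilde C}{\eps}, $$
that is,
$$ n \geq (C/\eps)^{\frac{\ell+1}{2}}\left(m|\log\eps| + \frac{m\log m}{\ell}\right), $$
which is precisely (\ref{eq_1800}) for a suitable universal constant $C$. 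The almost-sure injectivity of $T$ is as before.
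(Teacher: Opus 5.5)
Your proposal is correct and follows essentially the same route as the paper. You rescale to unit balancing scales, apply Lemma \ref{lem_1708} in the norm $\|\cdot\|_{K_i}$ while keeping the constraints $\cB_\ell(K_k)$, $k \neq i$, by convexity, combine Brunn--Minkowski with Lemma \ref{cor_931} to get the bound $(1-\eps/(16N))^{n\ell/m}$, and finish with a union bound over the $mN$ sets and the deterministic net argument. The only differences are cosmetic: you check explicitly that rescaling preserves the balancing property, and you aim for probability $1/2$ directly instead of $1-1/e$.
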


\begin{proof} By scaling we may assume that $r_i = 1$ for all $i$ (otherwise, replace $K_i$ by $r_i K_i$).
For $i=1,\ldots,m$ and $v \in S^{\ell-1}$ define
$$ \cB_i(v) = \left \{ T \in \cB \, ; \, \| T v \|_{K_i} \leq 1 - \frac{\eps}{4} \right \} $$
where the Minkowski functional $\| \cdot \|_{K_i}$ is defined as in (\ref{eq_1658}) above. 
By rotational invariance, as explained in the beginning of the proof 
of Proposition \ref{lem_1806}, the volume of $\cB_i(v)$ does not depend on the choice of $v \in S^{\ell-1}$. Set also 
$$ \cB_i = \left \{ T: \RR^\ell \rightarrow \RR^n \, ; \, T(B^{\ell}) \subseteq K_i \right \} $$
so that $$ \cB = \cB_1 \cap \ldots \cap \cB_m. $$ Let $\cS = \{ v_1,\ldots,v_N \} \subseteq S^{\ell-1}$ be a $\delta$-net
for $\delta = \sqrt{\eps/8}$ as in Section \ref{sec2}, with $N$ satisfying the bound (\ref{eq_1756}). We claim that 
for any $i$,
\begin{equation}
\frac{1}{N} \sum_{j=1}^N \cB_i(v_j) \subseteq \left(1 - \frac{\eps}{16N} \right) \cB_i. 
\label{eq_1703}
\end{equation}
Indeed, this follows from Lemma \ref{lem_1708} applied for $K_i$ in place of $K$.
Since $\cB_i(v_j) \subseteq \cB$ we obtain from (\ref{eq_1703}) that
\begin{equation}  \frac{1}{N} \sum_{j=1}^N \cB_i(v_j) \subseteq  \cB \cap \left(1 - \frac{\eps}{16N} \right) \cB_i  = \left(1 - \frac{\eps}{16N} \right) \cB_i \cap \bigcap_{j \neq i} \cB_j. \label{eq_1724} \end{equation}
Since $\Vol( \cB_i(v_j) )$ does not depend on $j$, by the Brunn-Minkowski inequality we conclude from (\ref{eq_1724}) that
$$ \Vol \left( \cB_i(v_1) \right)  \leq \Vol \left( \frac{1}{N} \sum_{j=1}^N \cB_i(v_j) \right) \leq \Vol \left( \left(1 - \frac{\eps}{16N} \right) \cB_i \cap \bigcap_{j \neq i} \cB_j \right). $$
Therefore, by Lemma \ref{cor_931}, for $i=1,\ldots,m$ and $v \in S^{\ell-1}$,
\begin{equation}  \frac{ \Vol \left( \cB_i(v) \right) }{\Vol(\cB)} = \frac{ \Vol \left( \cB_i(v_1) \right) }{\Vol(\cB)} \leq \left(1 - \frac{\eps}{16N} \right)^{n \ell/m}
\leq \exp \left( -\frac{\eps \ell n}{16 m N} \right). \label{eq_1759} \end{equation}
Let $T$ be uniformly distributed in $\cB$. By (\ref{eq_1759}) 
and the union bound,
$$
\PP\left(T\in\bigcup_{i=1}^m\bigcup_{j=1}^N\cB_i(v_j)\right)
\leq m N \exp\left(-\frac{\eps\ell n}{16mN}\right).
$$
Recall from (\ref{eq_1756}) that $N \leq 2 (200/\eps)^{(\ell-1)/2}$. 
We may select a large enough universal constant $C$ 
in (\ref{eq_1800})
so that (\ref{eq_1800}) implies that
$$
1+\log(mN)\leq\frac{\eps\ell n}{16mN}.
$$
Consequently, with probability of at least $1-1/e\geq1/2$,
$$
T\notin\bigcup_{i=1}^m\bigcup_{j=1}^N\cB_i(v_j).
$$
Fix such an operator $T$. Since $T\in\cB\subseteq\cB_i$ for every $i$,
we have $\|Tx\|_{K_i}\leq|x|$ for all $x\in\RR^\ell$.
For any $i=1,\ldots,m$ and $v\in S^{\ell-1}$, choose
$j\in\{1,\ldots,N\}$ with $|v-v_j|\leq\sqrt{\eps/8}$.
As in the proof of Theorem \ref{thm2}, the triangle inequality gives
\begin{align} \label{eq_1802}
\|Tv\|_{K_i}
&\geq 2\|Tv_j\|_{K_i}-\|T(2v_j-v)\|_{K_i} 
>2(1-\eps/4)-|2v_j-v| \\
&=2(1-\eps/4)-\sqrt{1+2|v_j-v|^2} 
\geq1-\frac{\eps}{2}-|v_j-v|^2
\geq1-\frac{5\eps}{8}
\geq\frac{1}{1+\eps}. \nonumber
\end{align}
We learn from (\ref{eq_1802}) that $K_i\cap E\subseteq(1+\eps)T(B^\ell)$ for all $i$, where $E = T(\RR^{\ell})$.
Since $T\in\cB\subseteq\cB_i$, the centered ellipsoid $\cF=T(B^\ell)$ therefore
satisfies
$$
\cF\subseteq K_i\cap E\subseteq(1+\eps)\cF
\qquad (i=1,\ldots,m).
$$
\end{proof}

\begin{proof}[Proof of Theorem \ref{thm3}]
Let $(1,r)$ be $\ell$-balancing scales for $K$ and $B^n$. Applying 
Theorem \ref{thm4} with $m = 2$ we obtain a subspace $E \in G_{n, \ell}$
and a centered ellipsoid $\cF \subseteq E$ such that 
$$
\cF\subseteq K\cap E\subseteq(1+\eps)\cF
\qquad \text{and} \qquad \cF\subseteq r B^n \cap E\subseteq(1+\eps)\cF.
$$
This implies that
$$ \frac{1}{1+\eps} r B_E \subseteq K \cap E \susbeteq (1 + \eps) r B_E. $$
The theorem follows by replacing $\eps$ with $\eps/3$ and slightly adjusting the value of $r$.
\end{proof}

\section{The radius of the approximating Euclidean ball}
\label{sec4}

How can we estimate the radius of the approximating Euclidean ball from Theorem \ref{thm3}? 
In view of the proof of Theorem \ref{thm3}, we need to bound a number $r > 0$ such that $(1,r)$ are $\ell$-balancing scales for $K$ and $B^n$.
Set $$ M(K) = \int_{S^{n-1}} \| x \|_K d \sigma(x) $$ 
where $\sigma$ is the uniform probability measure on $S^{n-1}$, and write $$ M^*(K) = M(K^{\circ}) $$ for half of the mean width of $K$.
Here $K^{\circ} = \left \{ x \in \RR^n \, ; \, \forall y \in K, \, x \cdot y \leq 1 \right \}$ is the polar body.
In this section we show that for all relevant values of $\ell$, the radius $r$ of the approximating Euclidean ball lies in the interval $$ \left [ \frac{c}{M(K)}, C M^*(K) \right] $$ for universal constants $c, C > 0$. Denote the volume radius of $K \subseteq \RR^n$ by 
$$
v.rad.(K)=\left(\frac{\Vol(K)}{\Vol(B^n)}\right)^{1/n}.
$$
This is the radius of the Euclidean ball with the same volume as $K$.

\begin{lemma}
Let $1\leq\ell\leq n$, and let $K\subseteq\RR^n$ be a convex body
containing the origin in its interior. Then
\begin{equation}
\Vol(\cB_\ell(K))
\leq v.rad.(K)^{n\ell} \cdot \Vol(\cB_\ell(B^n)).
\label{eq_operator_rearrangement}
\end{equation}
\label{lem_operator_rearrangement}
\end{lemma}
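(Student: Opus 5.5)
The plan is to use Steiner symmetrization of $K$, lifted to the operator body $\cB_\ell(K)$, and to show that $\Vol(\cB_\ell(K))$ does not decrease under it. Since $\Vol(K)$ is preserved by symmetrization, iterating towards a Euclidean ball gives the bound.

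Fix a unit vector $u\in\RR^n$ and write $S_uK$ for the Steiner symmetral of $K$ in direction $u$. Split $\RR^n=u^\perp\oplus\RR u$. Every $T\in\RR^{n\times\ell}$ is then uniquely a pair $(A,y)$, with $A:\RR^\ell\to u^\perp$ and $y\in\RR^\ell$, such that $Tx=Ax+\langle y,x\rangle u$. This identification is a linear isomorphism of $\RR^{n\times\ell}$ with $\RR^{(n-1)\times\ell}\times\RR^\ell$ preserving Lebesgue measure up to a constant. For fixed $A$ let
$$Y_A=\{y\in\RR^\ell\,;\,(A,y)\in\cB_\ell(K)\},$$
a convex subset of $\RR^\ell$. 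By Fubini, $\Vol(\cB_\ell(K))=\int \Vol_\ell(Y_A)\,dA$.

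The key claim is that $(A,(y_1-y_2)/2)\in\cB_\ell(S_uK)$ whenever $y_1,y_2\in Y_A$. To see this, fix $x\in B^\ell$. Both points $Ax+\langle y_1,x\rangle u$ and $Ax+\langle y_2,x\rangle u$ lie in $K$, so the chord of $K$ over the point $Ax\in u^\perp$ has length at least $|\langle y_1-y_2,x\rangle|$. The fiber of $S_uK$ over $Ax$ is therefore a centered segment of half-length at least $|\langle y_1-y_2,x\rangle|/2$. Hence $Ax+\langle (y_1-y_2)/2,x\rangle u\in S_uK$, proving the claim.

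Consequently the fiber of $\cB_\ell(S_uK)$ over $A$ contains $\tfrac12(Y_A+(-Y_A))$. By the Brunn--Minkowski inequality in $\RR^\ell$ this set has volume at least $\Vol_\ell(Y_A)$. Integrating over $A$ gives
$$\Vol(\cB_\ell(S_uK))\ge \Vol(\cB_\ell(K)).$$

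Monotonicity of $S_u$ shows that $S_uK$ still contains the origin in its interior: if $\rho B^n\subseteq K$ then $\rho B^n=S_u(\rho B^n)\subseteq S_uK$. Also $\Vol(S_uK)=\Vol(K)$. By the classical theorem, there is a sequence of Steiner symmetrizations $K_k$ of $K$ converging in the Hausdorff metric to $RB^n$, where $R=v.rad.(K)$.

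The step needing care is passing to the limit. I would handle it by noting that Hausdorff convergence to $RB^n$ gives $(1-\delta_k)RB^n\subseteq K_k\subseteq(1+\delta_k)RB^n$ with $\delta_k\to0$, since the limit contains a ball around the origin. Then $\cB_\ell(K_k)\subseteq(1+\delta_k)R\,\cB_\ell(B^n)$ by monotonicity and the homogeneity $\cB_\ell(tK)=t\cB_\ell(K)$. Therefore
$$\Vol(\cB_\ell(K))\le \Vol(\cB_\ell(K_k))\le (1+\delta_k)^{n\ell}R^{n\ell}\Vol(\cB_\ell(B^n)).$$
Letting $k\to\infty$ yields (\ref{eq_operator_rearrangement}).
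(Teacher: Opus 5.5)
Your proof is correct, but it takes a different route from the paper.

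The paper's short proof invokes the Brascamp--Lieb--Luttinger/Rogers rearrangement inequality as a black box. It discretizes the constraint $T(B^\ell)\subseteq K$ into finitely many constraints $Tv_j\in K$ along a dense sequence $v_1,v_2,\ldots\in S^{\ell-1}$, choosing the first $\ell$ of them linearly independent so the integrals are finite. It then compares $\int\prod_{j\le q}1_K(Tv_j)\,dT$ with the same integral for $RB^n$, and lets $q\to\infty$ by bounded convergence.

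You instead prove directly that each single Steiner symmetrization does not decrease $\Vol(\cB_\ell(\cdot))$. The two ingredients are the fiber inclusion $\tfrac12(Y_A-Y_A)\subseteq\{y\,;\,(A,y)\in\cB_\ell(S_uK)\}$ and Brunn--Minkowski in $\RR^\ell$, in the spirit of the Meyer--Pajor proof of the Blaschke--Santal\'o inequality. You then conclude via the classical convergence of iterated Steiner symmetrals to the origin-centred ball $RB^n$ and the homogeneity $\cB_\ell(tK)=t\cB_\ell(K)$. In effect, you run the proof of the rearrangement inequality in the special case where every chord of $K$ in direction $u$ is an interval. There the delicate one-dimensional rearrangement step reduces to the fact that the fiber $Y_A$, an intersection of slabs, can only gain volume when the slabs are centred.

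Your route is self-contained and elementary. It treats the whole continuum of constraints $x\in B^\ell$ at once, with no discretization or limiting argument in $q$, and it yields the stronger statement of monotonicity at every symmetrization step. The paper's route is shorter and rests on a general tool that does not need convexity.

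A minor point: your limiting step only uses the outer inclusion $K_k\subseteq(1+\delta_k)RB^n$, which follows immediately from Hausdorff convergence. The inner inclusion you mention is not needed.
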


\begin{proof} This is a special case of Haddad \cite{Haddad}. We include here a short proof for completeness. 
Set $R=v.rad.(K)$ and choose a dense sequence $v_1,v_2,\ldots$ in
$S^{\ell-1}$ whose first $\ell$ elements are linearly independent. 
The rearrangement inequality of Brascamp-Lieb-Luttinger \cite{BLL}
and Rogers \cite{rogers2} implies that for any $q\geq\ell$,
\begin{equation}
\int_{\RR^{n\times\ell}}\prod_{j=1}^q 1_K(Tv_j)\,dT
\leq
\int_{\RR^{n\times\ell}}\prod_{j=1}^q 1_{RB^n}(Tv_j)\,dT,
\label{eq_1535} \end{equation}
where $1_K$ is the indicator function of $K$.
The two integrals in (\ref{eq_1535}) are finite since $v_1,\ldots, v_{\ell} \in \RR^{\ell}$ are linearly independent. 
As $q\to\infty$ these two integrals tend to 
$\Vol(\cB_\ell(K))$ and $\Vol(\cB_\ell(RB^n))$, respectively, by the bounded convergence theorem.
Since $\cB_\ell(RB^n)=R\cB_\ell(B^n)$, the lemma follows.
\end{proof}

\begin{proposition}
Let $1\leq\ell\leq n$, and let $K\subseteq\RR^n$ be a convex body
containing the origin in its interior. Suppose that $(1,r)$ are
$\ell$-balancing scales for $K$ and $B^n$. Then
\begin{equation}
r\leq\inf_{t>0}\frac{v.rad.(K+tB^n)^2}{t}.
\label{eq_radius_parallel_volume}
\end{equation}
In particular:
\begin{enumerate}
\item[(i)] $\displaystyle r \leq 4 M^*(K)$. 
\item[(ii)] If the uniform distribution on $K$ has mean zero and identity covariance (i.e., $K$ is in ``isotropic position''), then
\begin{equation} 
r\leq C \cdot v.rad.(K),
\label{eq_1547}
\end{equation}
where $C>0$ is a universal constant. Moreover, (\ref{eq_1547}) holds 
true whenever $K$ is in Milman position (see e.g. Pisier \cite[Chapter 7]{pisier}), with constant
$C$ depending only on the Milman-position constant.
\end{enumerate}
\label{prop_radius_M_position}
\end{proposition}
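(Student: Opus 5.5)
The plan is to combine the maximality defining the $\ell$-balancing scales (Lemma \ref{def_1431}) with the Brunn--Minkowski inequality in $\RR^{n\times\ell}$ and the rearrangement bound of Lemma \ref{lem_operator_rearrangement}. I have not closed one step (Step 2 below), and it is load-bearing.

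Throughout, write $M=n\ell$, $V=\Vol(\cB_\ell(B^n))$, $L=K\cap rB^n$ and $x=(\Vol(\cB_\ell(L))/V)^{1/M}$. Note that $x \le \min\{r,\, v.rad.(K)\}$, by Lemma \ref{lem_operator_rearrangement} and since $\cB_\ell(L)\subseteq r\cB_\ell(B^n)$.

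\textbf{Step 1 (upper bound on $x$ via the parallel body).} For $t>0$ we have the inclusion $\cB_\ell(L)+t\cB_\ell(B^n)\subseteq \cB_\ell(K+tB^n)$. This holds because $(S+T)(B^\ell)\subseteq S(B^\ell)+T(B^\ell)$. Brunn--Minkowski in $\RR^{M}$ gives a lower bound on the volume of the left-hand side. Lemma \ref{lem_operator_rearrangement} applied to $K+tB^n$ gives an upper bound on the right-hand side. Together they yield
$$ x + t \le v.rad.(K+tB^n), \qquad\text{hence}\qquad 2\sqrt{xt}\le v.rad.(K+tB^n). $$
Squaring, we get $x\le v.rad.(K+tB^n)^2/(4t)$.

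\textbf{Step 2 (lower bound on $x$ from balancing).} This is the step I expect to be the main obstacle. We need a lower bound on $x$ of order $r$, say $x\ge r/4$, and it must come from the maximality of $F$ at $(1,r)$. As a first attempt I would compare $F(1,r)$ with $F(a,b)$ for pairs where $\cB_\ell(aK\cap bB^n)$ is estimated from below by a Minkowski combination of $\cB_\ell(aK)$ and $\cB_\ell(bB^n)$. Here I would use $\cB_\ell(A)\cap\cB_\ell(B)=\cB_\ell(A\cap B)$ from (\ref{eq_1629}) and the homogeneity of $F$. I would also use the one-sided estimate of Lemma \ref{cor_931}, namely $\Vol(\cB_\ell(K\cap srB^n))\le s^{M/2}\Vol(\cB_\ell(L))$ for every $s>0$. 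The idea is to let $s\to\infty$ and $s\to 0$ and combine the two limits with the bound of Step 1 applied to a dilate of $L$.

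Once $x\gtrsim r$ is established, Step 1 gives (\ref{eq_radius_parallel_volume}), up to the value of the constant. If this route fails to produce the clean constant, I would accept (\ref{eq_radius_parallel_volume}) with a universal constant $C$ in front, and adjust the constants in (i) and (ii) accordingly.

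\textbf{Step 3 (consequences).} For (i), Urysohn's inequality gives $v.rad.(K+tB^n)\le M^*(K+tB^n)=M^*(K)+t$. Choosing $t=M^*(K)$ in (\ref{eq_radius_parallel_volume}) gives $r\le (2M^*(K))^2/M^*(K)=4M^*(K)$.

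For (ii), take $t=v.rad.(K)$. In Milman position, $K$ and $B_K:=v.rad.(K)B^n$ have covering numbers $N(K,B_K),N(B_K,K)\le e^{Cn}$. Therefore $\Vol(K+B_K)\le N(B_K,K)\Vol(2K)$, and so $v.rad.(K+tB^n)\le C\,v.rad.(K)$, which gives (\ref{eq_1547}). In the isotropic case, it is known that such a $t$-dilate of the ball already satisfies $\Vol(K+tB^n)^{1/n}\le C\Vol(K)^{1/n}$. This can be obtained, for instance, via the $M$-position estimates valid for isotropic bodies, or by bounding $N(tB^n,K)$ through the isotropic covariance. The constant then depends only on the Milman-position constant.
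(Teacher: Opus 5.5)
Your Step~2 is not merely open: the bound it aims for is false, so the route through Step~1 cannot be completed. Step~1 only gives $v.rad.(K+tB^n)\ge x+t$. Deriving $rt\le C\,v.rad.(K+tB^n)^2$ from this at $t=x$ forces $x\ge r/(4C)$. But the balancing condition does not give $x\ge cr$.

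Take $\ell=1$, $n=3$ and $K=[-\delta,\delta]\times RB^2$ with $0<\delta<R$. Since $K=-K$, we have $\cB_1(K)=K$ and $\cB_1(B^3)=B^3$, so $r$ maximizes $G(t)=\Vol(K\cap tB^3)/t^{3/2}$. Here $G(t)=\frac43\pi t^{3/2}$ for $t\le\delta$, and $G(t)=2\pi\delta\bigl(t^{1/2}-\frac{\delta^2}{3}t^{-3/2}\bigr)$ for $\delta\le t\le R$. Both are increasing, so $r\ge R$. Yet $x^3\le\Vol(K)/\Vol(B^3)=\frac32\delta R^2$, so $x/r\le\bigl(3\delta/(2R)\bigr)^{1/3}\to0$ as $\delta/R\to0$. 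Products $\delta B^k\times RB^{n-k}$ with $1\le k<n/2$ behave the same way for every $\ell$.

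The loss is in Step~1. Brunn--Minkowski for $\cB_\ell(L)+t\cD$ only sees the truncated body $L=K\cap rB^n$, whereas $v.rad.(K+tB^n)$ can be far larger than $x+t$. Lemma~\ref{cor_931} does not rescue this. It only bounds volumes at other scales from above by $\Vol(\cB_\ell(L))$, which gives $x\ge\sqrt{s}\,r$ when $srB^n\subseteq K$ and nothing useful in general.

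The missing ingredient is a \emph{lower} bound for the volume of an intersection, used at the comparison scale $t$ rather than at $r$. Set $\cB=\cB_\ell(K)$, $\cD=\cB_\ell(B^n)$ and $M=n\ell$. The paper invokes the Rogers--Shephard inequality for centrally-symmetric bodies (in the Milman--Pajor form):
\[ \Vol(\cB\cap t\cD)\Vol(\cB+t\cD)\ge\Vol(\cB)\Vol(t\cD). \]
Combined with your inclusion $\cB+t\cD\subseteq\cB_\ell(K+tB^n)$ and Lemma~\ref{lem_operator_rearrangement}, this gives
\[ F(t):=\Vol(\cB\cap t\cD)^{1/M}/\sqrt t\ge\sqrt t\,\Vol(\cB)^{1/M}/v.rad.(K+tB^n). \]
At the scale $r$ you need only the trivial \emph{upper} bound $F(r)\le\Vol(\cB)^{1/M}/\sqrt r$, which comes from $\cB\cap r\cD\subseteq\cB$. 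Then $F(r)\ge F(t)$ yields $rt\le v.rad.(K+tB^n)^2$ with constant $1$, and no lower bound on $x$ is ever needed.

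Your Step~3 for (i) agrees with the paper. For the isotropic case of (ii), the universal constant comes from the resolution of the slicing problem (Klartag--Lehec), which places isotropic bodies in Milman position with a universal constant. Your sketch leaves this input unspecified.
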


\begin{proof} Set $M=n\ell = \dim(\RR^{n \times \ell})$, $\cB=\cB_\ell(K)$ and $\cD=\cB_\ell(B^n)$.
Since $\cB, \cD \subseteq \RR^{n \times \ell}$ are centrally-symmetric convex bodies, by the Rogers-Shephard inequality 
(see \cite[Remark~(1) following Corollary~3]{MilmanPajor}),
\begin{equation}
\Vol( \cB \cap t \cD ) \Vol(\cB + t \cD) \geq \Vol(\cB) \Vol(t \cD). 
\label{eq_2146} \end{equation}
Observe that $\cB+t\cD\subseteq\cB_\ell(K+tB^n)$. 
Lemma \ref{lem_operator_rearrangement} thus gives
$$
\Vol(\cB+t\cD)^{1/M}
\leq v.rad.(K+tB^n) \cdot \Vol(\cD)^{1/M}.
$$
Thus, by the Rogers-Shephard inequality (\ref{eq_2146}),
$$
F(t):=\frac{\Vol(\cB\cap t\cD)^{1/M}}{\sqrt t}
\geq\frac{\sqrt t\,\Vol(\cB)^{1/M}}{v.rad.(K+tB^n)}.
$$
The maximizing property of $r$ now shows that, for every $t>0$,
$$
\frac{\Vol(\cB)^{1/M}}{\sqrt r}
\geq F(r)\geq F(t)
\geq\frac{\sqrt t\,\Vol(\cB)^{1/M}}{v.rad.(K+tB^n)},
$$
which proves (\ref{eq_radius_parallel_volume}). Next, by the Urysohn inequality $v.rad.(K) \leq M^*(K)$. Hence,
by (\ref{eq_radius_parallel_volume}) with $t = M^*(K)$,
$$ r \leq \frac{ ( M^*(K + t B^n) )^2}{t} = 4 \frac{M^*(K)^2}{M^*(K)} = 4 M^*(K), $$
proving conclusion (i). As for conclusion (ii), the 
resolution of the slicing problem in Klartag and Lehec 
\cite{KlartagLehecSlicing} implies that $K$ is in Milman position, with a
Milman-position constant that is at most $C$. Therefore, by basic properties 
of the Milman position (e.g. \cite[Theorem 7.1]{pisier}), for $R = v.rad.(K)$,
$$
v.rad.(K+RB^n)\leq C R,
$$
which implies (ii) thanks to  (\ref{eq_radius_parallel_volume}).
\end{proof}

As explained, e.g., in Kashin and Szarek \cite[Lemma~3]{KashinSzarek},
if $E \subseteq \RR^n$ is an $\ell$-dimensional linear subspace and
$r B_E\subseteq[-1,1]^n$, then $r\leq\sqrt{n/\ell}$.
Thus, even when $K$ is in isotropic position,
we cannot bound the radius of the approximating Euclidean ball from
below by a universal constant times the volume radius of $K$,
even though Proposition \ref{prop_radius_M_position}(ii) guarantees
such an upper bound.

\begin{proposition} Let $1 \leq \ell \leq n$ with $\ell \leq c \log n$ and let $K \subseteq \RR^n$ be a convex body with the origin in its interior.
Suppose that $(1,r)$ are $\ell$-balancing  scales for $K$ and $B^n$. Then,
$$  \frac{\tilde{c}}{M(K)} \leq r, $$
where $c, \tilde{c} > 0$ are universal constants. 
\end{proposition}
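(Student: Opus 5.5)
The plan is to use only the maximizing property of the $\ell$-balancing scales from Lemma \ref{def_1431}, comparing the value at $r$ with the value at a test scale $t \approx c/M(K)$. Set $M = n\ell$, $\cB = \cB_\ell(K)$ and $\cD = \cB_\ell(B^n)$. Up to the harmless normalization, $(1,r)$ maximizes
$$ F(s) = \frac{\Vol(\cB \cap s\cD)}{s^{M/2}} \qquad (s > 0). $$
Two facts drive the argument. First, trivially $F(r) \leq \Vol(r\cD)/r^{M/2} = r^{M/2}\Vol(\cD)$. Second, if I can show that
$$ \Vol(\cB \cap t\cD) \geq p \cdot \Vol(t\cD) \qquad \text{for } t = c_0/M(K), $$
then $r^{M/2}\Vol(\cD) \geq F(r) \geq F(t) \geq p\, t^{M/2}\Vol(\cD)$, so $r \geq t\, p^{2/M}$. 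It therefore suffices to obtain the very weak bound $p \geq e^{-C n\ell}$, and the whole proof reduces to a lower bound on a probability that is exponentially small in the full dimension $M$.

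To get this lower bound I will use a Gaussian model instead of the uniform measure on $t\cD$. Let $G$ be an $n\times \ell$ matrix with i.i.d.\ $N(0,1/n)$ entries, and let $\cS=\{v_1,\ldots,v_N\}$ be a $1/2$-net in $S^{\ell-1}$ with $N \leq 5^\ell$. Consider the events
$$ \Omega_0 = \{\|G\|_{\ell_2^\ell\to\ell_2^n} \leq 2\}, \qquad \Omega_j = \{\|Gv_j\|_K \leq 4M(K)\} \quad (j=1,\ldots,N). $$
These properties hold:
\begin{enumerate}
\item[(a)] Each event is a centrally symmetric convex set of matrices.
\item[(b)] $\PP(\Omega_0) \geq 1/2$, by standard operator norm bounds for Gaussian matrices, using $n \geq \ell$.
\item[(c)] $Gv_j \sim N(0, I_n/n)$ is rotation invariant, so $\EE\|Gv_j\|_K = M(K)\,\EE|Gv_j| \leq M(K)$. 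Markov's inequality then gives $\PP(\Omega_j) \geq 3/4$.
\end{enumerate}
By the Gaussian correlation inequality (Royen), applied iteratively to these symmetric convex sets,
$$ \PP\Big(\Omega_0 \cap \bigcap_{j} \Omega_j\Big) \geq \tfrac12 (3/4)^{5^\ell}. $$
This is exactly where the hypothesis $\ell \leq c\log n$ enters: it makes $5^\ell \leq n \leq n\ell$, so this probability is at least $e^{-Cn\ell}$.

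On the intersection event, the usual net argument for the seminorm $x\mapsto \|Gx\|_K$ gives $\sup_{x \in S^{\ell-1}}\|Gx\|_K \leq 2\max_j\|Gv_j\|_K \leq 8M(K)$. Writing $x = v_j + y$ with $|y|\leq 1/2$ and absorbing the remainder into the supremum requires no symmetry of $K$, only the triangle inequality and positive homogeneity. Hence the event is contained in the set $A = 2\cD \cap 8M(K)\,\cB$. To pass from Gaussian measure to volume, I will bound the Gaussian density by $(n/2\pi)^{M/2}$, which gives $\Vol(A) \geq (2\pi/n)^{M/2} e^{-Cn\ell}$. I will then compare with $\Vol(2\cD) \leq 2^M \Vol(B^n)^\ell \leq (C/\sqrt n)^{M}$, using that every column of $T\in\cD$ lies in $B^n$. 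This gives $\Vol(A)\geq e^{-C'M}\Vol(2\cD)$. Rescaling by $1/(8M(K))$ yields $\Vol(\cB \cap t\cD) \geq e^{-C'M}\Vol(t\cD)$ with $t = 1/(4M(K))$, and the first paragraph finishes the proof with $\tilde c = e^{-2C'}/4$.

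The main obstacle is the probability lower bound for the simultaneous event. A plain union bound over the net only controls $\sup_x \|Gx\|_K$ up to a factor of order $\ell$, or $\sqrt\ell$ via the coordinate basis, because we have no Lipschitz control of $\|\cdot\|_K$ relative to $M(K)$. The remedy is that we need only an exponentially small probability $e^{-cn\ell}$, and the Gaussian correlation inequality turns the $N$ individually likely events into a joint probability $c^{N}$, which is admissible precisely when $5^\ell \lesssim n$. If one prefers to avoid Royen's theorem, the fallback would be a log-concave version: Prékopa--Leindler for the uniform measure on $\cD$ does not give correlation directly, so I expect the Gaussian detour to be the cleanest route.
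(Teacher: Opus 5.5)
\textbf{There is a genuine gap in step (a).} Your overall route is the paper's: a Gaussian matrix, a $1/2$-net of $S^{\ell-1}$, Markov's inequality at each net point, Royen's correlation inequality to get a joint probability $e^{-O(N)}=e^{-O(n\ell)}$ (using $\ell\le c\log n$), the Gaussian density bound, $\Vol(\cD)^{1/M}\le C/\sqrt n$, and the maximizing property of $r$. The failing step is that the proposition allows non-symmetric $K$. Then $\Omega_j=\{G\,;\,Gv_j\in 4M(K)K\}$ is convex but not centrally symmetric, since $-G\in\Omega_j$ means $Gv_j\in -4M(K)K$.

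Royen's inequality holds only for symmetric convex sets. It genuinely fails without symmetry, even for convex sets with the origin in their interior. For a standard Gaussian $g_1$ with distribution function $\Phi$ and any $a>0$, one has $\PP(g_1\le a)\,\PP(g_1\ge -a)-\PP(|g_1|\le a)=(1-\Phi(a))^2>0$. The same phenomenon appears in your setting whenever the net contains an antipodal pair $v_{j'}=-v_j$, since then $\Omega_j\cap\Omega_{j'}=\{Gv_j\in 4M(K)\,(K\cap(-K))\}$. So the bound $\PP(\Omega_0\cap\bigcap_j\Omega_j)\ge\frac12(3/4)^{5^\ell}$ is not justified as written. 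Your net argument for the non-symmetric gauge, the density comparison and the final use of the maximizing property are all fine.

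\textbf{The fix:} symmetrize before invoking the correlation inequality, which is what the paper does. The gauge of $K\cap(-K)$ is $\max\{\|x\|_K,\|-x\|_K\}\le\|x\|_K+\|-x\|_K$, and $M(-K)=M(K)$, so $M(K\cap(-K))\le 2M(K)$.

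\begin{itemize}
\item Replace $\Omega_j$ by $\{\|Gv_j\|_{K\cap(-K)}\le 8M(K)\}$. These sets are symmetric and convex, and Markov still gives probability at least $3/4$.
\item Symmetrizing costs nothing on the operator side: $\cB_\ell(-K)=\cB_\ell(K)$ because $B^\ell=-B^\ell$, so $\cB_\ell(K\cap(-K))=\cB$.
\item Your net argument then places the joint event inside $2\cD\cap 16M(K)\,\cB$, and the rest of your computation goes through with $t=1/(8M(K))$.
\end{itemize}

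The paper instead uses $H=K\cap(-K)\cap B^n$, normalized so that $M(H)\le 3$, with $\cB_\ell(H)=\cB\cap\cD$. This also folds your separate operator-norm event $\Omega_0$ into the same Markov-plus-net argument, so no random-matrix norm estimate is needed.
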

\begin{proof}
By scaling, we may assume that $M(K)=1$. 
As before, set $M = n \ell = \dim(\RR^{n \times \ell}), \cB=\cB_\ell(K)$ and $\cD=\cB_\ell(B^n)$. Write
$$ H=K\cap(-K)\cap B^n. $$ 
Then $H \subseteq \RR^n$ is a centrally-symmetric convex body with $M(H) \leq M(K) + M(-K) + 1 = 3$. Note that
\begin{equation} \cB_\ell(H)=\cB\cap\cD. \label{eq_1637} \end{equation}
Write $\| \cdot \|_H$ for the Minkowski 
functional of $H$. Let $G\in\RR^{n\times\ell}$ be a random matrix all of whose entries are independent, standard Gaussian
variables. Let $g$ be a standard Gaussian vector in $\RR^n$. Then for any  $v \in S^{\ell-1}$, 
$$ \EE \| G v \|_H = \EE \| g \|_H = \EE |g| \cdot M(H) \leq 3 \EE |g| \leq 3 \sqrt{ \EE |g|^2 } = 3 \sqrt{n}, $$
where the second equality is the standard relation between Gaussian integrals and $M(H)$, proven by integration 
in polar coordinates. By the Markov-Chebyshev inequality, for any $v \in S^{\ell-1}$, 
\begin{equation}  \PP \left( G v \in 6 \sqrt{n} H \right) \geq \frac{1}{2}. \label{eq_1632} \end{equation}
Let $v_1,\ldots,v_N \in S^{\ell-1}$ be a $1/2$-net in $S^{\ell-1}$, with $N \leq 10^{\ell}$. 
By (\ref{eq_1632}) and Royen's Gaussian correlation inequality \cite[Theorem~1]{LatalaMatlak},
\begin{equation}  \PP \left( \forall i, \ G v_i \in 6 \sqrt{n} H \right) \geq \prod_{i=1}^N \PP \left( G v_i \in 6 \sqrt{n} H \right) \geq \frac{1}{2^N}. \label{eq_1633} \end{equation}
Write $\cS \subseteq \RR^{\ell}$ for the convex hull of $v_1,\ldots,v_N \in S^{\ell-1}$. By the net property and an elementary convexity argument,
$B^{\ell} \subseteq 2 \cS$. Hence, from (\ref{eq_1633}),
\begin{equation}  \PP \left( G(B^{\ell}) \subseteq 12 \sqrt{n} H \right) \geq \frac{1}{2^N} \geq \frac{1}{2^M}, \label{eq_1636}  \end{equation}
where the last inequality follows from the facts that $N \leq 10^{\ell}$ and $M = n \ell$, combined with our assumption that $\ell \leq c \log n$ for a small universal constant $c > 0$.
Since the standard Gaussian density in $\RR^{n \times \ell}$ is bounded from above by $(2 \pi)^{-M/2}$, 
from (\ref{eq_1637}) and (\ref{eq_1636}),
$$ \Vol( \cB \cap \cD ) = \Vol( \cB_{\ell}(H) ) = \frac{ \Vol( \cB_{\ell}(12 \sqrt{n} H) ) }{(12 \sqrt{n})^{M}}   \geq \frac{\PP \left( G(B^{\ell}) \subseteq 12 \sqrt{n} H
\right)}{(12 \sqrt{n})^{M} \cdot (2 \pi)^{-M/2}}  \geq \left( \frac{c}{\sqrt{n}} \right)^M. $$
On the other hand, $\cD$ is contained in the Hilbert-Schmidt ball
of radius $\sqrt{\ell}$ in $\RR^{n \times \ell} \cong \RR^M$, so
$$
\Vol(\cD)^{1/M}
\leq\sqrt\ell\,\Vol(B^M)^{1/M}
\leq C \sqrt{\ell} \cdot \frac{1}{\sqrt{M}} = \frac{C}{\sqrt{n}}. 
$$
Consequently,
$\Vol(\cB\cap\cD)^{1/M} \geq c \cdot \Vol(\cD)^{1/M}$.
Thus, the maximizing property of $r$ gives
$$
\sqrt r\,\Vol(\cD)^{1/M}
\geq\frac{\Vol(\cB\cap r\cD)^{1/M}}{\sqrt r}
\geq \frac{ \Vol(\cB\cap\cD)^{1/M} }{\sqrt 1}
 \geq c\,\Vol(\cD)^{1/M}.
$$
Consequently $r\geq c$. 
\end{proof}

\appendix \section{Appendix}

The following is a variant of the standard volumetric bound for the cardinality of an $\eps$-net,
see e.g. Pisier \cite[Lemma 4.10]{pisier}.

\begin{lemma} For any $\ell \geq 1$ and $0 < \eps < 1/2$ there exists $\cS \subseteq S^{\ell-1}$
of cardinality at most $2 (5/\eps)^{\ell-1}$ which is an $\eps$-net, i.e., for any $x \in S^{\ell-1}$ there exists
$y \in \cS$ with $|x-y| \leq \eps$.
\label{lem_1714}
\end{lemma}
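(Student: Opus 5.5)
A maximal $\eps$-separated subset of $S^{\ell-1}$ will serve as the net. Choose $\cS=\{y_1,\ldots,y_N\}\subseteq S^{\ell-1}$ maximal with respect to inclusion such that $|y_i-y_j|>\eps$ for $i\neq j$. Maximality gives the net property at once: if some $x\in S^{\ell-1}$ had $|x-y_i|>\eps$ for all $i$, we could add $x$ to $\cS$. Finiteness, and the bound on $N$, come from the volume count below. The case $\ell=1$ is trivial: $S^0=\{\pm1\}$ has cardinality $2=2(5/\eps)^0$. So assume $\ell\geq2$.

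For the cardinality bound I would not use the standard count in the full ball, which gives $(1+2/\eps)^{\ell}$ and loses a factor of $1/\eps$. Instead, I would work on the sphere, or equivalently in a thin spherical shell, to get the exponent $\ell-1$.

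\textbf{Shell count.} Consider the sets
$$A_i=\left\{ t x \, ; \, 1-\tfrac{\eps}{2}\cdot\tfrac{1}{2}\leq t\leq 1+\tfrac{\eps}{4},\ x\in S^{\ell-1},\ |x-y_i|<\tfrac{\eps}{2}\right\},$$
that is, truncated cones over the open geodesic-type caps of Euclidean radius $\eps/2$ around $y_i$. These caps are pairwise disjoint by separation, so the $A_i$ are disjoint. By polar coordinates,
$$\Vol(A_i)=\sigma_{\ell-1}(\text{cap})\cdot\int_{1-\eps/4}^{1+\eps/4}t^{\ell-1}dt,$$
where $\sigma_{\ell-1}$ is the unnormalized surface measure. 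All the $A_i$ lie in the shell $(1+\eps/4)B^\ell\setminus(1-\eps/4)B^\ell$. Comparing volumes therefore gives
$$N\leq \frac{\sigma_{\ell-1}(S^{\ell-1})}{\sigma_{\ell-1}(\mathrm{cap}_{\eps/2})}.$$
So it all reduces to a lower bound on the surface measure of a cap of chordal radius $\eps/2$, namely $\sigma(\mathrm{cap}_{\eps/2})\geq \frac12(\eps/5)^{\ell-1}$ in normalized measure.

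\textbf{Cap estimate (the main obstacle).} I would obtain it by projection. The cap $\{x\in S^{\ell-1}; |x-y|<\eps/2\}$ contains the graph over the disc of radius $\rho$ in $y^\perp$, where $\rho=\frac{\eps}{2}\sqrt{1-\eps^2/16}\geq \frac{\eps}{2}\cdot 0.99$. The surface measure of this graph is at least the $(\ell-1)$-volume of its projection, $\Vol_{\ell-1}(\rho B^{\ell-1})$. We also have
$$\sigma_{\ell-1}(S^{\ell-1})=\ell\,\Vol(B^\ell)\leq 2\,\Vol_{\ell-1}(B^{\ell-1})\cdot \sqrt{\ell}\,c,$$
where the ratio $\Vol(B^\ell)/\Vol(B^{\ell-1})\leq C/\sqrt{\ell}$. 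This gives
$$N\leq \frac{\ell \Vol(B^\ell)}{\Vol(B^{\ell-1})}\,\rho^{-(\ell-1)}\lesssim \sqrt{\ell}\,(2.03/\eps)^{\ell-1}.$$
The factor $\sqrt{\ell}$ is absorbed since $\sqrt{\ell}\,(2.03)^{\ell-1}\leq 2\cdot 5^{\ell-1}$ for all $\ell\geq1$, which is a trivial check.

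If the polynomial factor turns out to be awkward, I would instead cover the upper and lower hemispheres separately. For each, I would project the $\eps$-separated points onto $y^\perp$-type coordinate discs and apply the ordinary $(\ell-1)$-dimensional volumetric argument. This is where the leading factor $2$ comes from naturally: each hemisphere contributes at most $(5/\eps)^{\ell-1}$ points, by packing balls of radius $\eps/(2\cdot\text{const})$ into a slightly enlarged unit disc of $\RR^{\ell-1}$. I expect the care needed to control distortion of the projection near the equator to be the main technical point. Splitting by the largest coordinate instead of by hemispheres, which uses $2\ell$ pieces, avoids that distortion but costs a factor $\ell$. So I would first try the direct surface-measure comparison above.
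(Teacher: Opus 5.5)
Your proof is correct, but it takes a different route from the paper. You pack disjoint \emph{spherical caps} of chordal radius $\eps/2$ and compare surface measures. That forces two auxiliary estimates: a lower bound on the area of a cap, which you get by projecting onto $y^\perp$ (the area element $dy/\sqrt{1-|y|^2}$ is at least $dy$), and the upper bound $\ell\Vol(B^\ell)/\Vol(B^{\ell-1})\leq\sqrt{2\pi\ell}$.

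The paper instead keeps the \emph{full Euclidean balls} of radius $\eps/2$ around the points of a maximal $\eps$-separated set. These are disjoint and lie in the shell $\{1-\eps/2<|x|<1+\eps/2\}$, so
$N(\eps/2)^\ell\leq(1+\eps/2)^\ell-(1-\eps/2)^\ell\leq\eps\ell(1+\eps/2)^{\ell-1}$. Hence $N\leq 2\ell(1+2/\eps)^{\ell-1}\leq 2\ell\,(5/(2\eps))^{\ell-1}\leq 2(5/\eps)^{\ell-1}$, using $\eps<1/2$ and $\ell\leq 2^{\ell-1}$. The factor of $\eps$ you feared losing in the ball count is recovered just by the thinness of the shell. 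The price is a factor $\ell$, which is absorbed by the slack between $5/2$ and $5$. No cap areas or Gamma-function ratios are needed.

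Your route gives the slightly sharper $N\leq\sqrt{2\pi\ell}\,(2.02/\eps)^{\ell-1}$, at the cost of more machinery. Two parts of it are unnecessary. The truncated cones $A_i$ add nothing, since disjointness of the caps on the sphere gives $N\,\sigma(\mathrm{cap})\leq\sigma(S^{\ell-1})$ directly. The hemisphere fallback is also not needed.

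One bookkeeping slip: in your final check you dropped the constant in $\Vol(B^\ell)/\Vol(B^{\ell-1})\leq C/\sqrt{\ell}$. With $C=\sqrt{2\pi}$ (for instance from Gautschi's inequality), the inequality you actually need is $\sqrt{2\pi\ell}\,(2.03)^{\ell-1}\leq 2\cdot 5^{\ell-1}$. This holds for all $\ell\geq 2$; at $\ell=2$ it reads roughly $7.2\leq 10$. It fails at $\ell=1$, but you treat that case separately, so nothing breaks. Still, the check is not as trivial as stated: a cruder ratio bound such as $\sqrt{2\pi/(\ell-1)}$ would already fail at $\ell=2$.
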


\begin{proof} For $\ell = 1$ set $\cS = \{-1,1\}$. From now on assume $\ell \geq 2$. Let $\cS \subseteq S^{\ell-1}$ be a maximal $\eps$-separated set. Then the open Euclidean balls
of radius $\eps/2$ that are centered at the points of $\cS$ are pairwise disjoint, while being contained in the spherical shell
$$ \left \{ x \in \RR^\ell \, ; \, 1 - \frac{\eps}{2} < |x| < 1 + \frac{\eps}{2} \right \}. $$
By comparing the volume of the union of these balls with the volume of the shell we obtain
$$ \#(\cS) \cdot (\eps/2)^\ell \leq \left( 1 + \frac{\eps}{2} \right)^{\ell} \, - \, \left( 1 - \frac{\eps}{2} \right)^{\ell}. $$
Since $y^{\ell} - x^{\ell} \leq (y-x) \ell y^{\ell-1}$ for $0 \leq x < y$, we conclude that
$$ \#(\cS) \leq (2 / \eps)^{\ell} \cdot \eps \ell \left( 1 + \frac{\eps}{2} \right)^{\ell-1}
= 2 \ell \left(1 + \frac{2}{\eps} \right)^{\ell-1} \leq 2 \ell \left( \frac{5/2}{\eps} \right)^{\ell-1} \leq 2 \left( \frac{5}{\eps} \right)^{\ell-1}. $$
Since a maximal $\eps$-separated set is an $\eps$-net, the lemma is proven.
\end{proof}

The following is a variant of a standard bound 
for the Dvoretzky dimension of the cube, see e.g.  Ball \cite[Lecture 2]{ball}.

\begin{lemma}
Let $2 \leq \ell \leq n, \eps > 0$ and set $K = [-1,1]^n$. Suppose that an
$\ell$-dimensional linear subspace $E \subseteq \RR^n$ and a centered
ellipsoid $\cF \subseteq E$ satisfy $
\cF \subseteq K \cap E \subseteq (1+\eps)\cF
$.
Then, 
\begin{equation}
 n \geq
 \left( \frac{(1+\eps)^2}{\eps(2+\eps)} \right)^{\frac{\ell-1}{2}}.
 \label{eq_cube_lower_bound}
\end{equation}
\label{lem_cube_lower_bound}
\end{lemma}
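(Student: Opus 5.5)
The idea is to exploit the linear structure of $\cF$ and the vertices of the cube through a cap-covering argument on the sphere $S^{\ell-1}$. Write $\cF = T(B^\ell)$ for an injective linear map $T:\RR^\ell\to\RR^n$ with image $E$. Set $u_i = T^* e_i \in \RR^\ell$, where $e_1,\ldots,e_n$ is the standard basis, so that $(Tx)_i = \langle x, u_i\rangle$. Then $\|Tx\|_K = \max_i |\langle x,u_i\rangle|$.

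The two inclusions translate as follows.
\begin{itemize}
\item $\cF\subseteq K$ means $\max_i |\langle x,u_i\rangle|\le 1$ for all $x\in B^\ell$, that is, $|u_i|\le 1$ for every $i$.
\item $K\cap E\subseteq(1+\eps)\cF$ means $\max_i|\langle x,u_i\rangle| \ge 1/(1+\eps)$ for every $x\in S^{\ell-1}$.
\end{itemize}

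Combining these, every $x\in S^{\ell-1}$ satisfies $|\langle x, \pm u_i/|u_i|\rangle| \ge |\langle x,u_i\rangle| \ge 1/(1+\eps)$ for some $i$. Set $w_i = u_i/|u_i|$, discarding any $u_i=0$. The sphere $S^{\ell-1}$ is therefore covered by the $2n$ caps
\[ \{x \in S^{\ell-1} \, ; \, \langle x, \pm w_i\rangle \ge 1/(1+\eps)\}. \]
Each such cap has angular radius $\alpha$ with $\cos\alpha = 1/(1+\eps)$, hence
\[ \sin\alpha = \frac{\sqrt{\eps(2+\eps)}}{1+\eps}. \]

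The remaining task is to bound the normalized measure of such a cap. The standard approach projects the cap onto the hyperplane $w^\perp$, where it lands in a ball of radius $\sin\alpha$. Comparing surface measures, this gives $\sigma(\text{cap}) \le \frac12 \sin^{\ell-1}\alpha$ for $\cos\alpha\ge 0$; this is the "cone" bound used in Ball's lecture notes. Since the $2n$ caps cover the sphere, summing yields
\[ 1 \le 2n\cdot\tfrac12 \sin^{\ell-1}\alpha = n \sin^{\ell-1}\alpha, \]
which is exactly \eqref{eq_cube_lower_bound}.

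The main obstacle is justifying the cap estimate $\sigma(C)\le \frac12\sin^{\ell-1}\alpha$ with the exact constant $1/2$. I would take the route through cones over the origin, which avoids the Jacobian. The normalized measure of the cap equals the normalized volume of the solid cone $\{t x \, ; \, 0\le t\le 1, x\in C\}$. This cone lies inside the union of two pieces:
\begin{itemize}
\item the cone over the flat disc $\{y+\cos\alpha\, w \, ; \, |y|\le \sin\alpha\}$;
\item the region of $B^\ell$ beyond that hyperplane.
\end{itemize}
Both pieces are contained in the half-ball of radius $\sin\alpha$ centred at $\cos\alpha\, w$, oriented away from the origin, together with the cone with apex $0$ over the disc. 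The cleaner argument is the classical one: the region $\{x\in B^\ell\, ; \, \langle x,w\rangle \ge \cos\alpha\}$ together with the cone is contained in the ball of radius $\sin\alpha$ centred at $\cos\alpha\, w$ whenever $\cos\alpha\ge 1/\sqrt2$. When $\cos\alpha < 1/\sqrt2$, the bound $n\ge \sin^{-(\ell-1)}\alpha$ is checked separately.

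In fact, in that remaining case the right-hand side of \eqref{eq_cube_lower_bound} is below $2^{(\ell-1)/2}$, and the trivial bound $n\ge \ell$ gives the claim as long as $\ell\ge 2^{(\ell-1)/2}$, which covers small $\ell$. I would handle any leftover range directly with the projection bound. Since $\eps>0$ is arbitrary here, I would first try to prove the cap bound $\sigma(C)\le \frac12\sin^{\ell-1}\alpha$ valid for all $\alpha\le\pi/2$ by the projection argument, and only fall back on case analysis if that fails.
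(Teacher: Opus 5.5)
Your reduction is correct. Writing $(Tx)_i=\langle x,u_i\rangle$ with $u_i=T^*e_i$, the inclusion $\cF\subseteq K$ gives $|u_i|\le 1$, and $K\cap E\subseteq(1+\eps)\cF$ gives $\max_i|\langle x,u_i\rangle|\ge 1/(1+\eps)$ on $S^{\ell-1}$. So the $2n$ caps around $\pm w_i$ with $\cos\alpha=1/(1+\eps)$ cover the sphere. This is the same covering the paper gets from the facet normals of $K\cap E$, and your derivation is a bit more self-contained.

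\textbf{The gap.} The one step that carries the quantitative content is the estimate $\sigma(C)\le\frac12\sin^{\ell-1}\alpha$. You assert it but do not prove it, and none of the routes you offer works.
\begin{itemize}
\item Orthogonal projection of the cap onto $w^\perp$ is $1$-Lipschitz. On its own it only shows that the area of the cap is \emph{at least} the volume of the disc of radius $\sin\alpha$, which is the wrong direction.
\item Your cone-in-ball argument has the condition reversed. The ball of radius $\sin\alpha$ centred at $\cos\alpha\,w$ contains the origin, and hence the solid cone, exactly when $\cos\alpha\le 1/\sqrt2$, not $\ge$.
\item Even in that regime the ball only gives $\sigma(C)\le\sin^{\ell}\alpha$. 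This is at most $\frac12\sin^{\ell-1}\alpha$ only when $\sin\alpha\le 1/2$, which is incompatible with $\cos\alpha\le1/\sqrt2$.
\item The half-ball plus disc-cone decomposition also fails to give the constant $1/2$ for $\alpha$ near $\pi/2$.
\item The fallback via $n\ge\ell$ covers only $\ell\le 6$, since $\ell\ge 2^{(\ell-1)/2}$ fails for $\ell\ge 7$. The leftover range is deferred to the unproved projection bound, which is circular.
\end{itemize}

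\textbf{The missing idea: a rescaling, as in the paper.} Since $\cos\alpha\ge 0$, parametrize the cap as the graph $\{(y,\sqrt{1-|y|^2})\,;\,y\in sB^{\ell-1}\}$ with $s=\sin\alpha$. Its area element is $(1-|y|^2)^{-1/2}dy$. Substituting $y=sz$ gives
\[ \int_{sB^{\ell-1}}\frac{dy}{\sqrt{1-|y|^2}} = s^{\ell-1}\int_{B^{\ell-1}}\frac{dz}{\sqrt{1-s^2|z|^2}} \le s^{\ell-1}\int_{B^{\ell-1}}\frac{dz}{\sqrt{1-|z|^2}}. \]
The last integral is the area of a hemisphere, i.e.\ half the area of $S^{\ell-1}$. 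This yields $\sigma(C)\le\frac12\sin^{\ell-1}\alpha$ for every $\alpha\in[0,\pi/2]$ at once, so no case split is needed. Combined with your covering it gives $1\le 2n\cdot\frac12\sin^{\ell-1}\alpha$, which is the claimed bound.
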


\begin{proof}
The section $K \cap E = [-1,1]^n \cap E$ is a polytope with at most $2n$ facets.
Applying an invertible, linear map from $E$ to $\RR^\ell$ which maps $\cF$
to $B^\ell$, we obtain a polytope $P \subseteq \RR^\ell$ with $N \leq 2n$ facets such that
\begin{equation} 
B^\ell \subseteq P \subseteq (1+\eps)B^\ell.
\label{eq_1243} \end{equation}
Write $u_1,\ldots,u_N \in S^{\ell-1}$ for the outer unit normals to
the facets, so that 
$$
P = \bigcap_{j=1}^N
\left\{ x \in \RR^\ell \, ; \, \langle x,u_j\rangle \leq h_j \right\},
$$
with $h_1,\ldots,h_N \geq 1$. As explained e.g. in Ball \cite[Lecture 2]{ball},
it follows from (\ref{eq_1243}) that  the
spherical caps
$$
C_j = \left\{ x \in S^{\ell-1} \, ; \,
\langle x,u_j\rangle \geq 1/(1+\eps) \right\}
\qquad (j=1,\ldots,N)
$$
cover the entire sphere $S^{\ell-1}$.  Write $\sigma$ for the uniform probability measure on $S^{\ell-1}$.
We will show that each cap $C_j$ has measure at most $s^{\ell-1}/2$
for $s = \sqrt{1 - (1+\eps)^{-2}}$.
Indeed, an orthogonal transformation maps $C_j$ to the 
set of all points of the form
$(y, \sqrt{1-|y|^2})$ with $y \in sB^{\ell-1}$. The surface area
element in these coordinates is $(1-|y|^2)^{-1/2}\,dy$. Thus,
\begin{align}
\sigma(C_j)
&= \frac{\displaystyle
       \int_{sB^{\ell-1}} \frac{dy}{\sqrt{1-|y|^2}}}
      {\displaystyle
       2\int_{B^{\ell-1}} \frac{dy}{\sqrt{1-|y|^2}}} = \frac{s^{\ell-1}\displaystyle
       \int_{B^{\ell-1}} \frac{dy}{\sqrt{1-s^2|y|^2}}}
      {\displaystyle
       2\int_{B^{\ell-1}} \frac{dy}{\sqrt{1-|y|^2}}}
\leq \frac{s^{\ell-1}}{2}. \label{eq_1457}
\end{align}
Since these $N$ caps cover the sphere we have $1 \leq N s^{\ell-1}/2 \leq n s^{\ell-1}$, 
and (\ref{eq_cube_lower_bound}) follows.
\end{proof}

\bigskip
\noindent School of Mathematical Sciences, Tel Aviv University, Tel Aviv, Israel; and \\
Department of Mathematics, Weizmann Institute of Science, Rehovot, Israel. \\
\textit{e-mail:} \texttt{klartagb@tau.ac.il}

\bigskip
\noindent School of Mathematical Sciences, Tel Aviv University, Tel Aviv, Israel. \\
\textit{e-mail:} \texttt{shaharmoshe2@tau.ac.il}



\begin{thebibliography}{99}

\bibitem{ball}
Ball, K., {\it An elementary introduction to modern convex geometry.}
\newblock In {Flavors of geometry}, volume 31 of
{Math. Sci. Res. Inst. Publ.}, pages 1--58.
Cambridge Univ. Press,  1997.

\bibitem{Bourgain}
Bourgain, J., Lindenstrauss, J.,
{\it Almost Euclidean sections in spaces with a symmetric basis.}
\newblock In {Geometric aspects of functional analysis}, Israel seminar.
Lecture Notes in Math., Vol. 1376, Springer, (1989), 278--288.

\bibitem{BLYZ}
B\"or\"oczky, K. J., Lutwak, E., Yang, D., Zhang, G.,
{\it The log-Brunn-Minkowski inequality.}
\newblock Adv. Math., Vol. 231, no. 3--4, (2012), 1974--1997.

\bibitem{BLYZ2}
B\"or\"oczky, K. J., Lutwak, E., Yang, D., Zhang, G.,
{\it The logarithmic Minkowski problem.} 
\newblock J. Amer. Math. Soc., Vol. 26, no. 3, (2013),  831--852.


\bibitem{BLL}
Brascamp, H. J., Lieb, E. H., Luttinger, J. M.,
{\it A general rearrangement inequality for multiple integrals.}
\newblock J. Functional Analysis, Vol. 17, no. 2, (1974), 227--237.

\bibitem{BBT}
Burago, D., Ivanov, S., Tabachnikov, S.,
{\it Topological aspects of the Dvoretzky theorem.}
\newblock J. Topol. Anal., Vol. 2, no. 4, (2010), 453--467.

\bibitem{DvoretzkyOriginalAnnouncement}
Dvoretzky, A.,
{\it A theorem on convex bodies and applications to Banach spaces.}
\newblock Proc. Nat. Acad. Sci. U.S.A., Vol. 45, no. 2,
(1959), 223--226.

\bibitem{DvoretskyOriginal}
Dvoretzky, A., {\it Some results on convex bodies and Banach spaces.}
\newblock In {Proc. Internat. Sympos. Linear Spaces (Jerusalem, 1960)},
pages 123--160. Jerusalem Academic Press, Jerusalem;
Pergamon, Oxford, 1961.

\bibitem{DvoretskyErratum}
Dvoretzky, A., {\it Errata.}
\newblock Proc. Nat. Acad. Sci. U.S.A., Vol. 45, no. 10,
(1959), 1554.

\bibitem{figiel}
Figiel, T.,
{\it A short proof of Dvoretzky's theorem on almost spherical
sections of convex bodies.}
\newblock Compositio Math., Vol. 33, no. 3, (1976), 297--301.

\bibitem{Fresen}
Fresen, D. J.,
{\it Explicit Euclidean embeddings in permutation invariant normed spaces.}
\newblock Adv. Math., Vol. 266, (2014), 1--16.

\bibitem{G}
Gordon, Y., {\it Some inequalities for Gaussian processes and applications.}
\newblock Israel J. Math., Vol. 50, no. 4, (1985), 265--289.

\bibitem{GLPT}
Gordon, Y., Litvak, A. E., Pajor, A., Tomczak-Jaegermann, N.,
{\it Random $\varepsilon$-nets and embeddings in $\ell^N_{\infty}$.}
\newblock Studia Math., Vol. 178, no. 1, (2007), 91--98.

\bibitem{Grothendieck}
Grothendieck, A.,
{\it Sur certaines classes de suites dans les espaces de Banach,
et le th\'eor\`eme de Dvoretzky-Rogers.}
\newblock Resenhas, Vol. 3, no. 4, (1998), 447--477.
\newblock With a foreword by Paulo Cordaro. Reprint of the 1953 original.

\bibitem{Haddad} Haddad, J., {\it A Rogers–Brascamp–Lieb–Luttinger inequality in the space of matrices. } 
In Geometry, Analysis and Convexity, Real Sociedad Matem\'atica Espa\~nola (RSME) Series, Vol. 17, Springer, (2026), 107--127.

\bibitem{KashinSzarek}
Kashin, B. S., Szarek, S. J.,
{\it The Knaster problem and the geometry of high-dimensional cubes.}
\newblock C. R. Math. Acad. Sci. Paris, Vol. 336, no. 11,
(2003), 931--936.

\bibitem{KlartagLehecSlicing}
Klartag, B., Lehec, J.,
{\it Affirmative resolution of Bourgain's slicing problem using
Guan's bound.}
\newblock Geom. Funct. Anal., Vol. 35, no. 4, (2025), 1147--1168.

\bibitem{KN}
Klartag, B., Novikov, T.,
{\it A weak version of the $\varepsilon$-Dvoretzky conjecture
for normed spaces.}
\newblock Israel J. Math., (2026), 12 pp.

\bibitem{LatalaMatlak}
Lata{\l}a, R., Matlak, D.,
{\it Royen's proof of the Gaussian correlation inequality.}
\newblock In {Geometric aspects of functional analysis}, Israel seminar.
Lecture Notes in Math., Vol. 2169, Springer, (2017), 265--275.


\bibitem{Mil71}
Milman, V. D.,
{\it A new proof of A. Dvoretzky's theorem on cross-sections of convex bodies.}
\newblock Funkcional. Anal. i Prilo\v{z}en., Vol. 5, no. 4,
(1971), 28--37.

\bibitem{MilmanTopology}
Milman, V. D.,
{\it A few observations on the connections between local theory
and some other fields.}
\newblock In {Geometric aspects of functional analysis}, Israel seminar.
Lecture Notes in Math., Vol. 1317, Springer, (1988), 283--289.

\bibitem{MilmanPajor}
Milman, V. D., Pajor, A.,
{\it Entropy and asymptotic geometry of non-symmetric convex bodies.}
\newblock Adv. Math., Vol. 152, no. 2, (2000), 314--335.

\bibitem{ConjectureStatement}
Milman, V. D., {\it Dvoretzky's theorem -- thirty years later.}
\newblock Geom. Funct. Anal. (GAFA), Vol. 2, no. 4, (1992), 455--479.


\bibitem{EMilmanMeanWidth}
Milman, E.,
{\it On the mean-width of isotropic convex bodies and their associated
$L_p$-centroid bodies.}
\newblock Int. Math. Res. Not. (IMRN), no. 11, (2015), 3408--3423.

\bibitem{PV}
Paouris, G., Valettas, P.,
{\it Dichotomies, structure, and concentration in normed spaces.}
\newblock Adv. Math., Vol. 332, (2018), 438--464.

\bibitem{pisier}
Pisier, G., {\it The volume of convex bodies and Banach space geometry.}
\newblock Volume 94 of {Cambridge Tracts in Math.}
Cambridge Univ. Press,  1989.

\bibitem{rogers}
Rogers, C. A., {\it Covering a sphere with spheres.}
\newblock Mathematika, Vol. 10, no. 2, (1963), 157--164.

\bibitem{rogers2}
Rogers, C. A., {\it A single integral inequality.}
\newblock J. London Math. Soc., Vol. 32, no. 1,
(1957), 102--108.

\bibitem{schechtman}
Schechtman, G., {\it Euclidean sections of convex bodies.}
\newblock In {Asymptotic geometric analysis}, {Fields Inst. Commun.}, Vol. 68, Springer, (2013), 271--288.

\bibitem{Szan}
Szankowski, A.,
{\it On Dvoretzky's theorem on almost spherical sections of convex bodies.}
\newblock Israel J. Math., Vol. 17, (1974), 325--338.

\bibitem{Tikhomirov}
Tikhomirov, K. E.,
{\it Almost Euclidean sections in symmetric spaces and concentration
of order statistics.}
\newblock J. Funct. Anal., Vol. 265, no. 9, (2013), 2074--2088.


\end{thebibliography}
\end{document}